\newtheorem{trm}{Theorem}[]
\newtheorem{lemma}[trm]{Lemma}
\theoremstyle{definition}
\theoremstyle{definition}
\theoremstyle{definition}
\theoremstyle{remark}
\newcommand{\normal}{\trianglelefteq}
\newcommand{\com}{\mathcal{C}}
\newcommand{\0}{\{0\}}
\newcommand{\N}{\mathbb{N}}
\newcommand{\auto}[1]{\text{Aut}\left( #1 \right)}
\author{James Bryden, Peter Rowley}
\date{\today}
\title{Automorphism Groups of the $PSL_2(q)$ Commuting Involution Graphs }
\begin{document}
\maketitle

\begin{abstract}
  Given a finite group $G$ and a conjugacy class of involutions $X$ of $G$,
  we define the commuting involution graph $\mathcal{C}(G,X)$ to be
  the graph with vertex set $X$ and $x,y \in X$ adjacent if and only if $x
  \neq y$ and $xy =yx$.
  In this paper the automorphism group of the graph $\mathcal{C}(G,X)$  is determined when $G = PSL_2(q)$.
\end{abstract}

\section{Introduction}\label{Introduction}
Given a finite group $G$ and a conjugacy class of involutions $X$ of $G$, the commuting involution graph on $X$, denoted $\com(G,X)$,
is the graph with vertex set $X$ in which distinct $x,y \in X$ are  adjacent if and only if $x$ and $y$ commute.
In \cite{FischerBernd1971Fggb}, Fischer constructed his three sporadic simple groups using the automorphism groups of commuting involution graphs. 
Since then, the commuting involution graphs for other finite simple groups have been investigated.
One can consult  \cite{Peter1, MR3695780} for symmetric and alternating groups, \cite{Bates1, MR2861687,Wright} for the sporadic groups, and \cite{MR4280329, comgraphspeciallinear, Ellis, MR2811072, 4dimsimp} and \cite{smallree} for groups of Lie-type.
The focus of much of this research is to describe the connectivity, diameters and disc structures of these graphs. 
Besides the work of Fischer and a result from \cite{MR3695780}, almost nothing is known about the automorphism groups of commuting involution graphs of simple groups.
In this paper, we take a small step towards filling that void. 
\begin{trm}\label{trm:1}
  Suppose that  $q$ is a prime power and $L = PSL_2(q)$.  Let $X$ denote the conjugacy class of involutions in $L$ and $A$ the automorphism group of the graph $\mathcal{C}(L,X)$. Then the following hold.
\begin{enumerate}
\item[(i)] If $q$ is even, then $A \cong \mathrm{Sym}(q - 1) \wr \mathrm{Sym}(q + 1)$.
\item[(ii)]  If $q$ is odd and $q \geq 7$,  then $A \cong P\Gamma L_2(q)$.
\end{enumerate}  
\end{trm}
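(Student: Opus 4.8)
The plan is to treat the two parities separately, since the graph has a completely different shape in each case. For part (i), where $q=2^f$, I would first pin down the local structure of $\com(L,X)$. Here $L=PSL_2(q)=SL_2(q)$, every involution is unipotent, and a Sylow $2$-subgroup is elementary abelian of order $q$ and self-centralising, while distinct Sylow $2$-subgroups meet trivially. Consequently the centraliser of an involution $t$ is precisely the unique Sylow $2$-subgroup containing it, so two involutions commute if and only if they lie in a common Sylow $2$-subgroup. Counting, there are $q+1$ Sylow $2$-subgroups, each contributing $q-1$ involutions, and they partition $X$; hence $\com(L,X)$ is the disjoint union of $q+1$ copies of the complete graph $K_{q-1}$. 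Since the automorphism group of a disjoint union of $m$ isomorphic cliques $K_n$ is $\mathrm{Sym}(n)\wr\mathrm{Sym}(m)$, taking $n=q-1$, $m=q+1$ gives (i). This part is routine once the clique decomposition is in hand.

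For part (ii) ($q$ odd) the easy inclusion comes first. Since $L$ has a single class of involutions, every automorphism of $L$ stabilises $X$ and preserves commuting, and as $L=\langle X\rangle$ the induced action on $X$ is faithful; using $\auto{L}\cong P\Gamma L_2(q)$ for $q>3$, this embeds $P\Gamma L_2(q)$ into $A$. The task is the reverse inclusion $A\le P\Gamma L_2(q)$. Because $L$ is transitive on $X$, the group $A$ is vertex-transitive and $|A|=|X|\,|A_s|$ for a vertex $s$. Writing $\epsilon=(-1)^{(q-1)/2}$, one has $|C_L(s)|=q-\epsilon$ and $|X|=q(q+\epsilon)/2$, so, using $|P\Gamma L_2(q)|=q(q^2-1)f$, the entire problem reduces to the single numerical bound $|A_s|\le 2(q-\epsilon)f$ (this value being exactly the order of the stabiliser of $s$ in $P\Gamma L_2(q)$); combined with the easy inclusion it forces equality throughout.

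To get a grip on $A_s$ I would record the local geometry, which is uniform across the two residue classes. The centraliser $C_L(s)$ is dihedral of order $q-\epsilon$ with $s$ its central involution, so the neighbours of $s$ are exactly the $(q-\epsilon)/2$ reflections; a direct computation shows that within $N(s)$ the commuting relation is a \emph{perfect matching}, the partner of $t$ being $st$. Hence every edge of $\com(L,X)$ lies in a unique triangle, these triangles are precisely the Klein four-subgroups of $L$, and the graph therefore recovers the partial product $(s,t)\mapsto st$ on commuting pairs as ``the third vertex of the unique triangle''. Any graph automorphism respects this partial multiplication. Translating into the action of $L$ on the projective line, an involution corresponds to its unordered pair of fixed points --- lying in $\mathbb{P}^1(GF(q))$ when $q\equiv 1\pmod 4$ and forming a conjugate pair in $\mathbb{P}^1(GF(q^2))$ when $q\equiv 3\pmod 4$ --- and two involutions commute exactly when the two pairs are harmonic, of cross-ratio $-1$. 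Thus $A$ acts on the set of pairs preserving the harmonic relation.

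The heart of the argument, and the step I expect to be the main obstacle, is to upgrade a harmonic-preserving permutation of pairs into a semilinear map of the projective line, that is, to reconstruct the point set $\mathbb{P}^1$ from the graph. The difficulty is that the pairs through a fixed point form a coclique but are \emph{not} distinguished among all cocliques by the commuting relation alone, so the points are not visible in a one-line combinatorial manner; one must instead propagate the local rigidity (the unique-triangle and perfect-matching structure) outward through the discs of the graph. Concretely, I would fix $s$ and use the matching on $N(s)$ together with the way second-disc vertices attach to the matched pairs to show that $A_s$ permutes the fixed points of $s$ and of its neighbours consistently, thereby inducing an action on $\mathbb{P}^1$; the standard characterisation of harmonic-preserving permutations of the projective line then places this induced map in $P\Gamma L_2(q)$ and yields the bound $|A_s|\le 2(q-\epsilon)f$. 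With the easy inclusion and the order count of the previous paragraph, this gives $A\cong P\Gamma L_2(q)$.
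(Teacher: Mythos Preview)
Your treatment of part~(i) is correct and matches the paper's: once one sees that $\com(L,X)$ is a disjoint union of $q+1$ copies of $K_{q-1}$, the wreath product is immediate.

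For part~(ii) your route diverges sharply from the paper's. The paper never attempts to reconstruct $\mathbb{P}^1$ or invoke a characterisation of harmonic-preserving permutations. Instead it fixes $t$, lets $K\trianglelefteq A_t$ be the pointwise stabiliser of $\Delta_1(t)$, and shows (separately for $q\equiv 1$ and $q\equiv 3\pmod 4$) that every $K$-orbit on $\Delta_2(t)$, respectively $\Delta_3(t)$, has the form $\{v,v^t\}$. The key device is the Weil bound: two vertices in the relevant disc can share a $K$-orbit only if they have identical ``distance-$2$ profiles'' back to $\Delta_1(t)$; expressing these profiles through discriminants of characteristic polynomials reduces the question to whether a certain degree-$8$ polynomial is a square for almost all field elements, which Weil forbids once $q\ge 73$ (resp.\ $q\ge 67$). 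It then follows that $t$ is the \emph{unique} element of $K$ with no fixed points on that disc, so $\langle t\rangle\trianglelefteq A_t$, whence $L\trianglelefteq A$ and $A\le\auto{L}$. Small $q$ are handled by machine.

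Your proposal, by contrast, rests entirely on the step you yourself flag as the main obstacle: recovering the point set of $\mathbb{P}^1$ from the harmonic relation on pairs. You correctly observe that point-pencils are cocliques not combinatorially distinguished among all cocliques, and your suggestion to ``use the matching on $N(s)$ together with the way second-disc vertices attach to the matched pairs'' is a hope, not a mechanism---you give no argument that the attachment data single out the pencils, and in the $q\equiv 3\pmod 4$ case, where fixed points lie in the quadratic extension and only Frobenius-conjugate pairs arise as vertices, it is not clear what an induced action of $A_s$ on $\mathbb{P}^1$ should even mean. As written, the entire substantive content of the theorem is concentrated at this unfilled step, so the proposal has a genuine gap. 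The paper's Weil-bound route is less conceptual but trades the open-ended reconstruction problem for a concrete and verifiable computation.
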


In the case when $q$ is even, $\mathcal{C}(L,X)$ has $q + 1$ connected components each of which is a complete graph on $q - 1$ vertices.
This observation immediately gives part (i) of Theorem~\ref{trm:1}, and so this paper is devoted to proving part (ii).
We begin in Section \ref{Preliminary} assembling properties of $\mathcal{C}(L,X)$ which we need for our proof.
There are two results which we single out on account of their playing an important role, namely Lemma~\ref{lem:factorisation}  and Theorem~\ref{Solutions}.
Lemma~\ref{lem:factorisation} indicates that vertex stabilizers in $A$ hold the key to determining $A$, while Theorem~\ref{Solutions} features prominently in Lemmas~\ref{lem:bound1} and ~\ref{lem:bound2}.
The threads proving Theorem~\ref{trm:1}(ii) are drawn together in Section~\ref{sec:main}, where we have to subdivide into the two cases $q \equiv 1 \mod{4}$ and $q \equiv 3 \mod{4}$.
This is necessary as, depending on the congruence of $q$, $\mathcal{C}(L,X)$ displays a number of different graph theoretic properties.

\section*{Acknowledgments}
We would like to thank Philip Dittmann for his help and advice relating to Theorem~\ref{Solutions}. 
The first author's  work was supported by the Heilbronn Institute for Mathematical Research.

\section{Preliminary Results}\label{Preliminary}

Throughout the remainder of this paper, $q \geq 7$ denotes an odd prime power, $L = PSL_2(q)$, $X$ is the conjugacy class of involutions in $L$, and we set $G = PGL_2(q)$.
Also set  $A = \mathrm{Aut}(\mathcal{C}(L,X))$.
We need to work explicitly with the elements of $X$, and so we use representatives in $SL_2(q)$.
Due to this choice, we  have the peculiarity that when $x,y \in X$ are distinct they commute in $L$ if and only if $xy=-yx$ in $SL_2(q)$.

For $x,y \in X$ we use the notation $d(x,y)$ to be the  graph distance metric on $\mathcal{C}(L,X)$.
That is, $d(x,y)$ is the length of a shortest path from $x$ to $y$ in $\mathcal{C}(L,X)$ and $\infty$ if no such path exists.
By the diameter of $\mathcal{C}(L,X)$ we mean the quantity $\max\,\{ d(x,y) \mid x,y \in X\}$.
Additionally, for a vertex $x \in X$ we define the $i$th disc (about $x$) to be the set
$\Delta_i(x) = \{ y \in X \mid d(x,y) = i\}$.
Finally, $A_x$ denotes the stabilizer in $A$ of $x$.

\subsection{The Structure of $\mathcal{C}(L,X)$}
As is to be expected, many structural results for $\mathcal{C}(L,X)$ can be obtained from basic  properties of the involution centralizers in  $L$.
Whilst some of these observations are agnostic of which field we are working over, much of the structure of $L$ and so $\mathcal{C}(L,X)$ are  very sensitive to changes in $q$. 
For our purposes, we divide our graphs into the two cases $q \equiv 1 \mod{4}$ and $q \equiv 3\mod{4}$.
However, it is worth noting that even  within these two classes the graphs exhibit quite different structures.

Before examining more graph-theoretic aspects of $\mathcal{C}(L,X)$, we discuss the  visible symmetries $\mathcal{C}(L,X)$ possesses. 
Notice that, as $L$ acts transitively on $X$ we have that $\mathcal{C}(L,X)$ is a vertex transitive graph. 
Moreover, as $L = \langle X \rangle$ and $X$ is the unique involution class in $L$, we have that $\auto{L} \cong P\Gamma L_2(q)$ acts faithfully on $\mathcal{C}(L,X)$.
It is then the role of Theorem~\ref{trm:1}(ii) to show that $A$ is no bigger than $\auto{L}$. 
A useful observation to make here is that we get a nice factorization of $A$. 
\begin{lemma}\label{lem:factorisation}
  For any vertex $x \in X$ we have $A = A_x L$. 
\end{lemma}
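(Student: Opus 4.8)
The plan is to prove $A = A_x L$ by showing that the $L$-orbit of $x$ already exhausts all of $X$, which will let me combine transitivity with the stabilizer to generate all of $A$.

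The plan is to derive the factorization from a standard orbit–stabilizer argument, exploiting the two observations already recorded in the discussion preceding the lemma: that $L$ acts transitively on the vertex set $X$, and that $L$ embeds into $A$ via $L \leq \auto{L} \cong P\Gamma L_2(q)$ acting faithfully on $\mathcal{C}(L,X)$. Regarding $L$ as a subgroup of $A$ in this way, I have a transitive subgroup of $A$ on the vertices, and the desired factorization is then purely formal.

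Concretely, I would fix $x \in X$ and take an arbitrary $g \in A$. Since $L$ acts transitively on $X$ and $g^{-1}x \in X$, there exists $\ell \in L$ with $\ell x = g^{-1} x$. Then $(g\ell)x = g(\ell x) = g(g^{-1}x) = x$, so $a := g\ell \in A_x$. Rearranging gives $g = a\ell^{-1} \in A_x L$, using that $\ell^{-1} \in L$. As $g$ was arbitrary, $A \subseteq A_x L$, and the reverse inclusion is immediate since $A_x, L \leq A$; hence $A = A_x L$.

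I would then record why this is the useful form. Because $A_x \cap L = C_L(x)$, the centralizer in $L$ of the involution $x$, whose order is known, the factorization yields $|A| = |A_x|\,|L| / |C_L(x)|$. Thus determining the vertex stabilizer $A_x$ suffices to determine $A$, which is exactly the reduction flagged in the introduction and the reason this lemma is singled out.

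I expect no genuine obstacle in proving the statement itself: it is a formal consequence of the transitivity of $L$ on $X$ together with $L \leq A$. The substantive work lies entirely downstream, in the later lemmas that pin down $A_x$; the present lemma merely isolates the vertex stabilizer as the quantity to be controlled.
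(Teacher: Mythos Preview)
Your argument is correct and is exactly the approach of the paper, which simply records that the factorization follows from the transitivity of $L$ on $X$; you have merely spelled out the standard orbit--stabilizer step in detail. The additional remarks on $|A| = |A_x|\,|L|/|C_L(x)|$ are accurate commentary but not needed for the lemma itself.
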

\begin{proof} This follows from $L$ being transitive on $X$.
\end{proof}

For a vertex $x \in X$ its stabilizer in $L$ is clearly $C_L(x)$ which is a dihedral group. %and a maximal subgroup of $L$.
The neighbours of $x$ in $\mathcal{C}(L,X)$ form the first disc, $\Delta_1(x) \subseteq C_L(x)$, and $L_x = A_x \cap L$ partitions  $\Delta_1(x)$ into two orbits. 
However, $G_x = A_x \cap G$ - also a dihedral group - acts transitively on $\Delta_1(x)$.
It is for this reason that we sometimes prefer to use $G_x$ instead of $L_x$ in our arguments as a  known subgroup of $A_x$.

The involution structure in  $L$ forces vertex neighbourhoods in $\com(L,X)$ to have very small intersections.
As we tackle small values of $q$ computationally, we assume $q > 11$ in the next two lemmas,  and hence $L_x$ is a maximal subgroup of $L$.
However, we note that Lemmas~\ref{lem:4cycle} and \ref{lem:eigenvalues} are true for all odd $q$.
\begin{lemma}\label{lem:4cycle}
  Assume $q > 11$. 
  Then for all distinct $x,y \in X$ we have $|\Delta_1(x) \cap \Delta_1(y)| \leq 1$.
  In particular, the graph $\mathcal{C}(L,X)$ is $4$-cycle free. 
\end{lemma}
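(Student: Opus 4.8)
\emph{The plan} is to translate the adjacency relation of $\com(L,X)$ into an orthogonality relation on a $3$-dimensional quadratic space, after which the bound becomes a one-line dimension count. Recall that, with representatives taken in $SL_2(q)$, the involutions $X$ are exactly (the images of) the trace-zero matrices $v$ with $v^2 = -I$, equivalently $\operatorname{tr}(v) = 0$ and $\det{v} = 1$; and that distinct $x,y \in X$ are adjacent precisely when they anticommute, $xy = -yx$. Let $\mathcal{T} = \{v \in M_2(\F) : \operatorname{tr}(v) = 0\}$, a $3$-dimensional space over $\F$ carrying the symmetric bilinear form $B(u,v) = \tfrac12\operatorname{tr}(uv)$. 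The first step is the identity $uv + vu = \operatorname{tr}(uv)\,I$, valid for all $u,v \in \mathcal{T}$ (a quick polarization of Cayley--Hamilton). It shows that $x$ and $z$ anticommute if and only if $B(x,z) = 0$: \textbf{adjacency in $\com(L,X)$ is exactly orthogonality with respect to $B$}.

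Having made this translation, I would record two facts. First, $B$ is nondegenerate on $\mathcal{T}$ (here $q$ is odd), and every $v \in X$ satisfies $B(v,v) = \tfrac12\operatorname{tr}(v^2) = -\det{v} = -1 \neq 0$, so no vertex is orthogonal to itself. Second, if $x \neq y$ in $L$ then $x \neq \pm y$ in $SL_2(q)$, and since $\det{x} = \det{y} = 1$ this forces $x$ and $y$ to be linearly independent in $\mathcal{T}$. Now let $z \in \Delta_1(x) \cap \Delta_1(y)$. Then $z \in \langle x, y\rangle^{\perp}$, and by nondegeneracy $\dim \langle x,y\rangle^{\perp} = 3 - 2 = 1$, say $\langle x,y\rangle^{\perp} = \langle w\rangle$. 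A one-dimensional space contains at most one vertex of $X$: $\mu w \in X$ forces $\mu^2 \det{w} = 1$, which has at most the two solutions $\mu = \pm\mu_0$, and these give the same element of $L$. Hence $|\Delta_1(x) \cap \Delta_1(y)| \leq 1$.

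Finally, $4$-cycle freeness is immediate from the bound: a $4$-cycle $x - z - y - w - x$ would exhibit two distinct vertices $z \neq w$ each adjacent to both $x$ and $y$, contradicting $|\Delta_1(x) \cap \Delta_1(y)| \leq 1$ for the distinct pair $x,y$.

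\emph{The only real work} is the opening translation; once adjacency is identified with orthogonality, the rest is linear algebra over $\F$. I expect the point requiring care to be the identity $uv+vu = \operatorname{tr}(uv)I$ together with the nondegeneracy of $B$ in odd characteristic; both are elementary but underpin everything. A purely group-theoretic alternative avoids the form entirely: if $z,w$ are common neighbours of $x,y$ then $zw$ commutes with both $x$ and $y$, so $zw \in C_{SL_2(q)}(x) \cap C_{SL_2(q)}(y)$; since distinct involutions of $L$ lie in distinct maximal tori, which meet only in $Z(SL_2(q)) = \{\pm I\}$, we get $zw = \pm I$ and hence $z = w$ in $L$. I would present the orthogonal-geometry version as the cleaner route.
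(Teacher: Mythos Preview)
Your argument is correct and genuinely different from the paper's. The paper argues group-theoretically: for $q>11$ the centralizers $C_L(x)$ and $C_L(y)$ are maximal dihedral subgroups, and any element of order $\geq 3$ in their intersection would be normalized by both, forcing $N_L(\langle g\rangle)=L$ and contradicting simplicity; hence $C_L(x)\cap C_L(y)$ is an elementary abelian $2$-group, and if it has order $4$ it must be $\{1,x,y,xy\}$, giving $|\Delta_1(x)\cap\Delta_1(y)|\leq 1$. Your route instead identifies adjacency with $B$-orthogonality on the $3$-dimensional trace-zero space via $uv+vu=\operatorname{tr}(uv)I$, and then a dimension count in the nondegenerate quadratic space $(\mathcal{T},B)$ pins any common neighbour to a single line, which carries at most one involution modulo sign. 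Your approach is cleaner and, notably, does not use the hypothesis $q>11$ at all (the paper itself remarks that the lemma holds for all odd $q$, but its proof as written leans on maximality of $C_L(x)$, which fails for small $q$). The paper's argument, on the other hand, stays within the ambient group-theoretic language and avoids introducing the bilinear form.

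One small caution: your closing ``purely group-theoretic alternative'' is sketchier than the main line. The assertion that $C_{SL_2(q)}(x)\cap C_{SL_2(q)}(y)=\{\pm I\}$ for distinct involutions $x,y$ is true but is essentially equivalent to what you are proving, and the phrase ``distinct involutions of $L$ lie in distinct maximal tori'' needs the observation that a non-central element of $T_x\cap T_y$ would have abelian centralizer containing both $x$ and $y$, forcing $xy=yx$ in $SL_2(q)$ and hence $(xy)^2=-I$ with $xy\in\langle x,y\rangle^{\perp}$, etc. Since you explicitly prefer the orthogonal-geometry version, this is not a defect in the submitted proof, but I would either drop the alternative or flesh it out.
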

% \begin{proof} %non-maximal 
%   It is well known that, if $H \leq L$ is a cyclic subgroup of even order and $K \leq H$ is non-trivial then  $N_L(H)=N_L(K)$.
%   If $x,y \in X$ are distinct involutions, it follows from the fact that involution centralisers in $L$ are dihedral, that $C_L(x)\cap C_L(y)$
%   is either trivial, contains a single involution, or is a fours-group generated by $x$ and $y$.
%   In any case, we have that  $|\Delta_1(x) \cap \Delta_1(y)| \leq 1$ and $\com(L,X)$ has no subgraphs which are 4-cycles. 
% \end{proof}
\begin{proof} %maximal 
  Let $x,y \in X$ be distinct involutions.
  Then $C_L(x)$ and $C_L(y)$ are maximal subgroups of $L$ which are dihedral groups.
  Suppose that $g \in C_L(x) \cap C_L(y)$ is such that $g$ has order at least $3$.
  Then $N_L(\langle g \rangle ) \geq C_L(x), C_L(y)$ and thus $N_L(\langle g \rangle) = L$.
  This contradicts the fact that $L$ is simple.
  Therefore $C_L(x) \cap C_L(x)$ is either trivial or an elementary  abelian 2-group of order 2 or 4.
  In the case that $C_L(x) \cap C_L(y)$ is a fours-group, we must have $C_L(x) \cap C_L(y) = \{1,x,y,xy\}$.
  In any case, $|\Delta_1(x) \cap \Delta_1(y)| \leq 1$.
  In particular, $\mathcal{C}(L,X)$ contains no subgraphs which are 4-cycles. 
\end{proof}

The next lemma allows us to test whether two vertices are close together in $\com(L,X)$.
\begin{lemma}\label{lem:eigenvalues}
  Assume $q > 11$ and let $x,y \in X$ be distinct involutions. 
  If $q \equiv 1\mod{4}$, then $d(x,y) \leq 2$ if and only if $xy$ has two distinct eigenvalues, and 
  if $q \equiv 3\mod{4}$, then $d(x,y) \leq 2$ if and only if $xy$ does not have two distinct eigenvalues. 
\end{lemma}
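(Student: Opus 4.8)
The plan is to recast everything inside the $3$-dimensional space $V$ of trace-zero $2\times 2$ matrices over $\F$, carrying the symmetric bilinear form $B(v,w)=\mathrm{tr}(vw)$ and the quadratic form $Q(v)=\det{v}$; since a trace-zero $v$ satisfies $v^2=-\det{v}\,I$, we have $B(v,v)=\mathrm{tr}(v^2)=-2\det{v}$. Each vertex is represented (up to the sign $x\mapsto -x$) by an $x\in SL_2(q)$ with $x^2=-I$, that is, by a trace-zero matrix with $Q(x)=1$. The identity $xy+yx=\mathrm{tr}(xy)\,I$, valid for trace-zero $x,y$, shows that the criterion $xy=-yx$ recorded before Lemma~\ref{lem:4cycle} is equivalent to $\mathrm{tr}(xy)=0$; thus $x$ and $y$ are adjacent precisely when $B(x,y)=0$.

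Next I would identify, for distinct non-adjacent $x,y$, all common neighbours. Such a $z$ is an involution anticommuting with both $x$ and $y$, so by the identity above $z\in\{x,y\}^\perp$, which is the line spanned by $z_0:=xy-yx$ (one checks $\mathrm{tr}(z_0x)=\mathrm{tr}(z_0y)=0$, and $z_0\neq 0$ since $xy=yx$ would force $y=\pm x$). Writing $P=xy$, the computation $(xy)(yx)=xy^2x=x(-I)x=-x^2=I$ shows $yx=P^{-1}$, so $z_0=P-P^{-1}$; by Cayley--Hamilton $P^{-1}=\mathrm{tr}(P)\,I-P$, whence $z_0=2P-\mathrm{tr}(xy)\,I$ and $\det{z_0}=4-\mathrm{tr}(xy)^2$. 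Hence $\langle z_0\rangle$ contains an involution --- equivalently $\lambda^2\det{z_0}=1$ is solvable --- if and only if $4-\mathrm{tr}(xy)^2$ is a nonzero square, and any such $z$ is automatically distinct from $x$ and $y$ (as $x\notin\{x,y\}^\perp$ because $B(x,x)=-2\neq 0$). Since the adjacent case $\mathrm{tr}(xy)=0$ also gives $4-\mathrm{tr}(xy)^2=4$, a nonzero square, I obtain the uniform criterion
\[
  d(x,y)\le 2 \iff 4-\mathrm{tr}(xy)^2 \text{ is a nonzero square in } \F .
\]

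It then remains to read this off in terms of eigenvalues: $xy$ has two distinct eigenvalues in $\F$ exactly when its characteristic polynomial $\lambda^2-\mathrm{tr}(xy)\lambda+1$ has distinct roots in $\F$, i.e.\ when the discriminant $\mathrm{tr}(xy)^2-4$ is a nonzero square. When $q\equiv 1\pmod 4$, $-1$ is a square, so $4-\mathrm{tr}(xy)^2$ is a nonzero square if and only if $\mathrm{tr}(xy)^2-4$ is, giving (i). When $q\equiv 3\pmod 4$, $-1$ is a nonsquare, so the criterion becomes ``$\mathrm{tr}(xy)^2-4$ is a nonzero nonsquare'', i.e.\ $xy$ has no eigenvalue in $\F$ and so does not have two distinct eigenvalues.

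The delicate point --- and the only place the two congruences genuinely part --- is the boundary $\mathrm{tr}(xy)=\pm 2$, where $4-\mathrm{tr}(xy)^2=0$ and the plane $\langle x,y\rangle$ degenerates. For $q\equiv 3\pmod 4$ I must rule it out for distinct vertices, so that ``does not have two distinct eigenvalues'' cannot accidentally capture the repeated-root case: if $\mathrm{tr}(xy)=2$ then $w=x+y$ has $\mathrm{tr}(w)=0$ and $\det{w}=\det{x}+\det{y}-\mathrm{tr}(xy)=0$, hence $w^2=0$, while $\mathrm{tr}(xw)=0$ forces $xw=-wx$; if $w\neq 0$ then, placing $w$ in standard nilpotent form, $x$ becomes upper triangular with $\det{x}=-a^2=1$, so $-1$ would be a square --- impossible. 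Thus $w=0$, i.e.\ $y=-x$, contradicting distinctness (the case $\mathrm{tr}(xy)=-2$ follows by replacing $y$ with $-y$). For $q\equiv 1\pmod 4$ the case does occur, with $xy$ unipotent up to sign; there $\{x,y\}^\perp=\langle z_0\rangle$ is isotropic, no common neighbour exists, and $d(x,y)\ge 3$, consistent with $xy$ having a repeated eigenvalue. This boundary analysis is the main obstacle; the remainder is the bilinear-form bookkeeping above.
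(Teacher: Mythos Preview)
Your argument is correct and takes a genuinely different route from the paper. The paper works in $G=PGL_2(q)$ acting on the projective line: it observes that $xy$ has two distinct eigenvalues precisely when $xy$ fixes a pair of projective points $\{P,Q\}$, identifies the setwise stabiliser $G_{\{P,Q\}}$ as the centraliser $C_G(z)$ of an involution $z$ fixing $P$ and $Q$, and then uses that $C_G(z)$ is a maximal dihedral subgroup to pull $x$ and $y$ themselves into $C_G(z)$; the two congruences of $q$ then decide whether this $z$ lies in $L$ or only in $G\setminus L$. Your approach instead realises the vertex set inside the $3$-dimensional orthogonal space of trace-zero matrices, reduces adjacency to orthogonality, and pins down the line $\{x,y\}^\perp$ explicitly as $\langle xy-yx\rangle$, so that the existence of a common neighbour becomes the square condition on $\det(xy-yx)=4-\mathrm{tr}(xy)^2$. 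This is more hands-on but buys several things: it produces the common neighbour by formula, it never invokes maximality of the dihedral subgroups (so the hypothesis $q>11$ is not used, recovering directly the remark preceding Lemma~\ref{lem:4cycle} that the statement holds for all odd $q$), and it makes the degenerate case $\mathrm{tr}(xy)=\pm 2$ entirely transparent. The paper's argument is shorter and ties the lemma to the ambient permutation geometry of $G$; yours is self-contained linear algebra.
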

% \begin{proof} %non-maximal 
%   The group $G$ has two conjugacy classes of involutions and they are characterised by how many points of the projective line their elements fix.
%   That is, one conjugacy class contains involutions fixing exactly two points of the projective line, and the other contains involutions which fix no points of the projective line.

%   Suppose that $x,y \in G$ are involutions such that $xy$ has two distinct eigenvalues.
%   Then, $xy$ fixes two points of the projective line, $P$ and $Q$.
%   Moreover, $G_{\{P,Q\}} = C_G(z)$ for some involution $z$ fixing $P$ and $Q$. 
%   As $C_G(z)$ is a dihedral group and contains the normaliser of any non-trivial subgroup of its charactaristic cycle, we have that $xy \in C_G(z)$ implies that $x,y \in C_G(z)$. 
%   Thus, $xy$ has two distinct eigenvalues if and only if $x,y \in C_G(z)$ for some involution $z$ fixing two projective line points. 
  
%   In the case that  $q \equiv 1 \mod{4}$ the unique involution class of $L \leq G$ contains involutions fixing two points of the projective line.
%   So when $x,y \in L$ are involutions it follows that  $xy$ has two distinct eigenvalues if and only if  $x,y \in C_L(z)$ for some involution $z \in L$, if and only if $d(x,y)  \leq 2$.
%   In the case that $q \equiv 3\mod{4}$ the unique involution class of $L$ contains elements fixing no points of the projective line. As such, involutions $x,y \in L$ satisfy $xy$ has two distinct eigenvalues if and only if $d(x,y) >  2$.
% \end{proof}
\begin{proof} %maximal 
  The group $G$ has two conjugacy classes of involutions and they are characterised by how many points of the projective line their elements fix.
  That is, one conjugacy class contains involutions fixing exactly two points of the projective line, and the other contains involutions which fix no points of the projective line.
  Suppose that $x,y \in G$ are involutions such that $xy$ has two distinct eigenvalues.
  Then $xy$ fixes two points of the projective line, $P$ and $Q$.
  Moreover, $G_{\{P,Q\}} = C_G(z)$ for some involution $z$ fixing $P$ and $Q$. 
  As $C_G(z)$ is a dihedral group, and is maximal in $G$, $xy \in C_G(z)$ implies that $x,y \in C_G(z)$. 
  Thus $xy$ has two distinct eigenvalues if and only if $x,y \in C_G(z)$ for some involution $z$ fixing two projective line points. 
  
  In the case that  $q \equiv 1 \mod{4}$ the unique involution class of $L \leq G$ contains involutions fixing two points of the projective line.
  So when $x,y \in L$ are involutions it follows that  $xy$ has two distinct eigenvalues if and only if  $x,y \in C_L(z)$ for some involution $z \in L$, if and only if $d(x,y)  \leq 2$.
  In the case that $q \equiv 3\mod{4}$ the unique involution class of $L$ contains elements fixing no points of the projective line. As such, involutions $x,y \in L$ satisfy $xy$ has two distinct eigenvalues if and only if $d(x,y) >  2$.
\end{proof}

In  \cite{comgraphspeciallinear}, it is shown that when $q$ is not too small the graph $\mathcal{C}(L,X)$ has diameter 3.
In addition, when $q \equiv 1 \mod{4}$ the discs are determined explicitly.
The next lemma collates this information and extends the explicit description of the discs  to the case $q \equiv 3\mod{4}$.
However, as $\com(L,X)$ is vertex transitive, we have the freedom to choose any vertex to centre our discussion on.
When $q \equiv 1\mod{4}$, we set 
\[
  t =  \begin{bmatrix}    \iota &0 \\ 0 & -\iota   \end{bmatrix} \in X
 \]
 where $\iota \in GF(q)$ is such that $\iota^2 = -1$, and when $q\equiv3\mod{4}$  
 \[
  t =
  \begin{bmatrix}
    0 & 1 \\ -1 & 0
  \end{bmatrix} \in X.
\] 
\begin{lemma}\label{lem:disks}
  When $q \equiv 1\mod{4}$ and $q \geq 17$  or $q\equiv3\mod{4}$ and $q \geq 7$ the graph $\mathcal{C}(L,X)$ has diameter 3.
  Moreover,  if $q\equiv 1\mod{4}$ the discs of $t$ are as follows.
  \begin{enumerate}
  \item[(i)] $\Delta_1(t) = \left\{ \begin{bmatrix} 0 & \omega\\ -1/\omega & 0 \end{bmatrix} \,\middle\vert\, \omega \neq 0 \right\}$.
  \item[(ii)] $\Delta_2(t) = \left\{ \begin{bmatrix} \sigma & \mu^2 \tau\\\tau & -\sigma \end{bmatrix}\,\middle\vert\,\begin{matrix} \sigma, \mu, \tau \neq 0\\ \sigma^2+\mu^2\tau^2 = -1\end{matrix}\right\}$.
  \item[(iii)] $\Delta_3(t) = \left\{ \begin{bmatrix} \iota & \alpha \\0&-\iota\end{bmatrix},\,\begin{bmatrix}\iota&0\\\alpha&-\iota\end{bmatrix} \,\middle\vert\, \alpha \neq 0 \right\} \cup \left\{ \begin{bmatrix} \sigma & \mu \tau\\\tau & -\sigma \end{bmatrix}\,\middle\vert\, \begin{matrix} \sigma, \mu, \tau \neq 0\\ \mu \text{ is non-square}\\ \sigma^2+\mu\tau^2 = -1\end{matrix} \right\}$.  
  \end{enumerate}
  If $q \equiv 3\mod{4}$  the discs of $t$ are as follows.
\begin{enumerate}
  \item[(i)] $\Delta_1(t) =
    \left\{
      \begin{bmatrix}
        \sigma & \tau \\
        \tau & - \sigma 
      \end{bmatrix}
      \, \middle\vert \,
      \sigma^2 + \tau^2 = -1.
    \right\}$.
  \item[(ii)] $\Delta_2(t) = 
    \left\{
      \begin{bmatrix}
        -\frac{(\beta+ \gamma)\tau}{2\sigma} & \beta \\
        \gamma & \frac{(\beta+ \gamma)\tau}{2\sigma}
      \end{bmatrix}
      \,\middle\vert\,
      \begin{matrix}
        \sigma^2+\tau^2 = -1,\\
        \frac{(\beta+ \gamma)^2\tau^2}{4\sigma^2} + \beta\gamma = -1\\
        \beta \neq \gamma
      \end{matrix}
    \right\}$.
  \item[(iii)] $\Delta_3(t) =
    \left\{
      \frac{1}{1+\alpha^2}
      \begin{bmatrix}
        \alpha(\omega^{-1}  - \omega) &  \alpha^2\omega^{-1}  + \omega \\ 
        -\alpha^2\omega - \omega^{-1}  &  -\alpha(\omega^{-1} - \omega) 
      \end{bmatrix}
      \,\middle\vert\,
      \begin{matrix}
        \alpha \\
        \omega \neq 0,\pm 1
        \end{matrix}
    \right\}$.
  \end{enumerate}
\end{lemma}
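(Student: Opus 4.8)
The plan is to prove the diameter bound and the explicit disc descriptions by leveraging Lemma~\ref{lem:eigenvalues} together with the concrete matrix computations available in $SL_2(q)$. Since $\com(L,X)$ is vertex transitive, I fix the representative $t$ given above for each congruence class of $q$. The key tool is the translation between graph distance and the algebraic behaviour of the product $xy$: recall that distinct involutions $x,y$ are adjacent precisely when $xy = -yx$, and that by Lemma~\ref{lem:eigenvalues}, $d(x,y) \leq 2$ is detected by whether $xy$ has two distinct eigenvalues. The diameter-$3$ claim itself I would cite from \cite{comgraphspeciallinear} as indicated; my main work is the explicit stratification of $X \setminus \{t\}$ into $\Delta_1(t), \Delta_2(t), \Delta_3(t)$.

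First I would compute $\Delta_1(t)$ directly. A general element of $X$ has the form of a trace-zero involution in $SL_2(q)$, i.e. a matrix $\begin{bmatrix} \sigma & \beta \\ \gamma & -\sigma \end{bmatrix}$ with $-\sigma^2 - \beta\gamma = 1$ (so that the determinant is $1$ and the square is $-I$, giving an involution in $L$). Imposing the adjacency condition $tx = -xt$ for the chosen $t$ reduces to a short linear computation on the entries, which pins down $\Delta_1(t)$ exactly as stated in each case. For $\Delta_2(t)$, I would characterize it as those non-adjacent $y$ for which there exists $z \in \Delta_1(t)$ with $z \in \Delta_1(y)$; equivalently, by Lemma~\ref{lem:eigenvalues}, those $y \neq t$ at distance exactly $2$, which is controlled by whether $ty$ has two distinct eigenvalues (in the $q\equiv 1$ case) or not (in the $q \equiv 3$ case). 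Parametrizing the solutions of the eigenvalue condition subject to the involution constraint $\sigma^2 + \beta\gamma = -1$ yields the stated sets, where the square/non-square distinction on the parameter $\mu$ (respectively the sign of the relevant discriminant) separates $\Delta_2$ from part of $\Delta_3$. Finally, $\Delta_3(t)$ is whatever remains, and I would confirm the given parametrization covers exactly the complement $X \setminus (\{t\} \cup \Delta_1(t) \cup \Delta_2(t))$ and is consistent with diameter $3$.

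The main obstacle I anticipate is the $q \equiv 3 \bmod 4$ case, which is the new contribution here (the $q \equiv 1$ discs being already known from \cite{comgraphspeciallinear}). Here the chosen $t = \begin{bmatrix} 0 & 1 \\ -1 & 0 \end{bmatrix}$ fixes no points of the projective line, so the convenient diagonal structure available when $q \equiv 1$ is lost, and the eigenvalue criterion runs in the opposite direction (distance $\leq 2$ corresponding to $ty$ \emph{not} having two distinct eigenvalues). The parametrizations for $\Delta_2$ and $\Delta_3$ in this case are genuinely more intricate — note the appearance of the auxiliary variables $\beta, \gamma, \sigma, \tau$ in $\Delta_2$ and of $\alpha, \omega$ in $\Delta_3$ — and the difficulty is less in any single computation than in verifying completeness: that every involution at distance $2$ (respectively $3$) admits the claimed normal form and, conversely, that every matrix of the claimed form is a genuine involution lying at the asserted distance. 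To keep this honest I would verify (a) each displayed matrix squares to $-I$ and has determinant $1$, so genuinely lies in $X$; (b) the eigenvalue/adjacency condition places it in the correct disc; and (c) a counting or surjectivity argument shows the parametrization exhausts the disc, so that the three discs partition $X \setminus \{t\}$ and no vertex is left over — which simultaneously reconfirms that the diameter is exactly $3$.
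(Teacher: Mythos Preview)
Your plan is broadly sound and matches the paper for the $q\equiv 1\pmod 4$ case (which is indeed just a citation to \cite{comgraphspeciallinear}) and for $\Delta_1(t)$ and $\Delta_2(t)$ when $q\equiv 3\pmod 4$: the paper likewise obtains $\Delta_1(t)$ from the anticommutation relation $xt=-tx$, and obtains $\Delta_2(t)$ exactly as you first suggest, by fixing $s\in\Delta_1(t)$, computing $\Delta_1(s)$ from $xs=-sx$, excising the part already in $\Delta_1(t)$, and then letting $s$ run over $\Delta_1(t)$. Note, however, that the paper does \emph{not} invoke Lemma~\ref{lem:eigenvalues} at this point; the eigenvalue criterion is reserved for later arguments (Lemmas~\ref{lem:poly1} and~\ref{lem:poly2}), and the displayed parametrisation of $\Delta_2(t)$ in the $q\equiv 3$ case visibly records the intermediate neighbour $s=\begin{bmatrix}\sigma&\tau\\\tau&-\sigma\end{bmatrix}$ rather than any discriminant condition.

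The genuine divergence is in how $\Delta_3(t)$ is handled for $q\equiv 3\pmod 4$. You propose to treat it as ``whatever remains'' and then verify the stated parametrisation post~hoc by checking membership in $X$, checking the distance condition, and counting. The paper instead \emph{derives} the parametrisation: it computes $G_t=C_G(t)$ explicitly, showing that every element is (up to scalar) one of $g_\alpha=\begin{bmatrix}1&\alpha\\-\alpha&1\end{bmatrix}$ or $h_\alpha=\begin{bmatrix}\alpha&1\\1&-\alpha\end{bmatrix}$, observes that the anti\-diagonal involutions $t_\omega=\begin{bmatrix}0&\omega\\-\omega^{-1}&0\end{bmatrix}$ with $\omega\neq 0,\pm 1$ form a transversal for the $G_t$-orbits on $\Delta_3(t)$, and then conjugates $t_\omega$ by $g_\alpha$ to produce the displayed matrices. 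This explains at a glance where the parameters $\alpha$ and $\omega$ come from and why the formula looks the way it does; your verification-and-counting route would work in principle but leaves the shape of the parametrisation unmotivated and the surjectivity step awkward. More to the point, the $G_t$-orbit description is not incidental: it is reused in Lemma~\ref{lem:bound2}, where the reduction ``every $s_{\alpha,\omega}$ lies in the $G_t$-orbit of some $s_{0,\omega}$'' is exactly what makes that argument tractable. So even if your route closes the lemma, you would have to redo the orbit computation later anyway.
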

\begin{proof}
  In the case that $q \equiv 1 \mod{4}$ and $q \geq 17$ the diameter and discs  are described  by Proposition~2.4, Proposition~2.5 and  Theorem~2.7  in \cite{comgraphspeciallinear}.

  Suppose  then that $q \equiv 3 \mod{4}$ and $q \geq 7$.
  Let $x \in X$ be given by $\begin{bmatrix} \alpha& \beta \\ \gamma&-\alpha\end{bmatrix}$.
  If $x$ and $t$ commute in $L$ then, in $SL_2(q)$, we have $xt=-tx$. This yields the relation $\beta=\gamma$. 
  Thus,
  \[
    \Delta_1(t) = \left\{ \begin{bmatrix} \sigma & \tau \\ \tau & -\sigma \end{bmatrix}\,\middle\vert\, \sigma^2  + \tau^2 = -1\right\}.
  \]
  Fix $s = \begin{bmatrix}\sigma & \tau \\ \tau &-\sigma \end{bmatrix}\in\Delta_1(t)$ and suppose that $x \in \Delta_1(s)$.
  Then, $\alpha = -\frac{(\beta +\gamma)\tau}{2\sigma}$. If $\beta =\gamma$ then $x \in \Delta_1(t)$. 
  Thus
  \[
    \Delta_1(s) \cap \Delta_2(t) =
    \left\{ \begin{bmatrix} -\frac{(\beta +\gamma)\tau}{2\sigma} & \beta \\ \gamma & \frac{(\beta +\gamma)\tau}{2\sigma}\end{bmatrix}\,
      \middle\vert
      \,
      \begin{matrix}
        \frac{(\beta +\gamma)^2\tau^2}{4\sigma^2} + \beta\gamma = -1\\
        \beta \neq \gamma
      \end{matrix}
    \right\},
  \]
  and so
  \[
    \Delta_2(t) =
    \left\{ \begin{bmatrix} -\frac{(\beta +\gamma)\tau}{2\sigma} & \beta \\ \gamma & \frac{(\beta +\gamma)\tau}{2\sigma}\end{bmatrix}\,
      \middle\vert
      \,
      \begin{matrix}
        \sigma^2+\tau^2 = -1 \\
        \frac{(\beta +\gamma)^2\tau^2}{4\sigma^2} + \beta\gamma = -1\\
        \beta \neq \gamma
      \end{matrix}
    \right\}.
  \]

  Now, by Theorem~1.1 in \cite{comgraphspeciallinear}, every other vertex unaccounted for must lie in $\Delta_3(t)$.
  We determine $\Delta_3(t)$ by determining $G_t = C_G(t)$ and its orbits on $\Delta_3(t)$.

  Suppose that $g = \begin{bmatrix}\alpha & \beta\\ \gamma & \delta\end{bmatrix}$ is such that $g \in C_G(t)$. 
  Then, $gt = \epsilon tg$ for some $\epsilon \in GF(q)^*$.
  But, (in $GL_2(q)$) we have  $t^{-1} = -t$.
  Thus, as $g^{t^2} = g$ we must have $\epsilon^2 = 1$ and so  $\epsilon = \pm 1$. 

  If $gt = tg$, then $\delta = \alpha$ and $\gamma = -\beta$. 
  If $gt = -tg$, then $\delta= -\alpha$ and $\gamma = \beta$.
  Hence $g$ takes one of the two forms
  \[
    \begin{bmatrix}
      \alpha & \beta \\
      -\beta & \alpha 
    \end{bmatrix}
    \text{ or }
    \begin{bmatrix}
      \alpha & \beta \\
      \beta & -\alpha 
    \end{bmatrix}.
  \]
  However,  we can ignore scalars in $G$ and so $g$ can be  represented by one of
  \[
    g_{\alpha}:=
    \begin{bmatrix}
      1 & \alpha \\
      -\alpha & 1 
    \end{bmatrix}
    \text{ or }
    h_{\alpha}:=
    \begin{bmatrix}
      \alpha & 1 \\
      1 & -\alpha 
    \end{bmatrix}.
  \]
  Observe that,
  \[
    g_{\alpha}^{-1} = \frac{1}{1+\alpha^2}g_{-\alpha}  \text{ and }
    h_{\alpha}^{-1} = \frac{1}{1+\alpha^2}h_{\alpha}.
  \]

  Letting $t_\omega = \begin{bmatrix}0&\omega\\-1/\omega&0\end{bmatrix}$, we have that $t = t_1$.
  Moreover, $t_\omega \in \Delta_3(t)$ for all $\omega \neq 0,\pm 1$.
  Lastly, $t_\omega$ and $t_\lambda$ are in the same $G_t$-orbit if and only if $\lambda \in \{ \pm \omega, \pm 1/\omega\}$.
  Hence, the $t_\omega$ represent the $G_t$-orbits of $\Delta_3(t)$. 

  We have
  \[
    t_\omega^{g_{\alpha}} =\frac{1}{1+\alpha^2}g_{-\alpha} t_\omega g_{\alpha}
    =\frac{1}{1+\alpha^2}
    \begin{bmatrix}
      \alpha(\omega^{-1}  - \omega) &  \alpha^2\omega^{-1}  + \omega \\ 
      -\alpha^2\omega - \omega^{-1}  &  -\alpha(\omega^{-1} - \omega) 
    \end{bmatrix}
  \]
  and
  \[
    t_\omega^{h_{\alpha}} = \frac{1}{1+\alpha^2}h_{\alpha} t_\omega h_{\alpha} =
    \frac{1}{1+\alpha^2}
    \begin{bmatrix}
      \alpha(\omega  - \omega^{-1})&  -\alpha^2\omega  - \omega^{-1} \\
      \alpha^2\omega^{-1}  + \omega  &  -\alpha(\omega  - \omega^{-1})
    \end{bmatrix}.
  \]

  Observe that, if $\alpha \neq 0$ then $t_\omega^{g_{\alpha}} =- t_\omega^{h_{1/\alpha}}$,
  whereby, 
  \[
    t_\omega^{G_t} =
    \left\{
      \begin{bmatrix}
        0 & -\omega^{-1} \\ \omega & 0 
      \end{bmatrix}\,,\,
      \frac{1}{1+\alpha^2}
      \begin{bmatrix}
        \alpha(\omega^{-1}  - \omega) &  \alpha^2\omega^{-1}  + \omega \\ 
        -\alpha^2\omega - \omega^{-1}  &  -\alpha(\omega^{-1} - \omega) 
      \end{bmatrix}
      \,\middle\vert\,
      \alpha 
    \right\}.
  \]
\end{proof}

\subsection{Solutions of Certain Polynomials}

Recalling that a polynomial over $GF(q)$ is said to be absolutely irreducible if it is irreducible as a polynomial over the algebraic closure of $GF(q)$, we have the following appearing as Theorem 6.57 in \cite{finitefields} which gives bounds on sizes of certain solution sets.

\begin{trm}\label{Solutions}
  Let $m \in \N$  and let $f \in GF(q)[x]$  with $\deg{f} >1$ be 
  such that $y^k -f(x)$ is absolutely irreducible, where $k = gcd(m, q - 1)$.
  Then the number $N$ of solutions of the equation $y^m = f(x)$ 
  in $GF(q)^2$ satisfies $|N-q| \leq (k-1)(d-1)\sqrt{q}$
  where $d$ is the number of distinct roots of $f$ in its splitting field over $GF(q)$.
\end{trm}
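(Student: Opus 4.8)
The plan is to reduce the point count to a short sum of multiplicative character sums and then invoke Weil's bound. Since the fibre of the map $y \mapsto y^m$ over a nonzero value $c$ has the same cardinality as the fibre of $y \mapsto y^k$, where $k = \gcd(m,q-1)$, the number of $y \in GF(q)$ with $y^m = c$ equals $\sum_{j=0}^{k-1}\chi^j(c)$ for any fixed multiplicative character $\chi$ of order exactly $k$ (using the convention $\chi^j(0)=0$ for $1 \le j \le k-1$), while $y^m = 0$ has the single solution $y = 0$. Summing over $x \in GF(q)$ and isolating the contribution of the principal character $\chi^0$, the $j=0$ terms together with the roots of $f$ in $GF(q)$ contribute exactly $q$, so that
\[
  N - q = \sum_{j=1}^{k-1} S_j, \qquad S_j := \sum_{x \in GF(q)} \chi^j\bigl(f(x)\bigr).
\]

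Second, I would bound each $S_j$ individually. For $1 \le j \le k-1$ the character $\chi^j$ is nontrivial, of order $e_j := k/\gcd(j,k) > 1$, and Weil's estimate for character sums (Theorem~5.41 of \cite{finitefields}) gives $|S_j| \le (d-1)\sqrt{q}$ provided $f$ is not an $e_j$th power of a polynomial; here $d$ is the number of distinct roots of $f$, which is unchanged on passing to the splitting field. The hypothesis that $y^k - f(x)$ is absolutely irreducible is exactly what supplies this for every $j$. By the standard irreducibility criterion for binomials $y^k - a$, absolute irreducibility forces $f$ to be a nonpower modulo each prime dividing $k$ (and, when $4 \mid k$, to avoid the exceptional $-4(\,\cdot\,)^4$ shape); since every prime dividing $e_j$ divides $k$ and $4 \mid e_j$ implies $4 \mid k$, these same conditions hold for the divisor $e_j$, so $y^{e_j} - f(x)$ is absolutely irreducible as well. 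Hence Weil's bound applies uniformly to all $k-1$ summands.

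Finally, applying the triangle inequality to the displayed identity yields
\[
  |N - q| = \Bigl| \sum_{j=1}^{k-1} S_j \Bigr| \le \sum_{j=1}^{k-1} |S_j| \le (k-1)(d-1)\sqrt{q},
\]
which is the assertion. The genuinely hard input is Weil's character-sum bound itself, which is equivalent to the Riemann Hypothesis for the curves $y^{e_j} = f(x)$ over $GF(q)$; I would quote this rather than reprove it. The only verification left to carry out by hand is the transfer of absolute irreducibility from $y^k - f$ to the intermediate binomials $y^{e_j} - f$, which is a routine application of the binomial irreducibility criterion, and the bookkeeping with the value $\chi^j(0)$ that separates the principal character off to produce the leading term $q$.
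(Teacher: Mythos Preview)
The paper does not prove this theorem at all; it simply quotes it as Theorem~6.57 of Lidl--Niederreiter~\cite{finitefields} and uses it as a black box. Your outline is correct and is in fact precisely the proof given in that reference: one writes the fibre count over each $x$ as $\sum_{j=0}^{k-1}\chi^j(f(x))$, separates off the principal character to obtain $N-q=\sum_{j=1}^{k-1}S_j$, and then bounds each $S_j$ by $(d-1)\sqrt{q}$ via the Weil estimate (Theorem~5.41 there). Your remark that absolute irreducibility of $y^k-f$ forces $f$ to be a non-$e_j$th power for every divisor $e_j>1$ of $k$, so that Weil's hypothesis is met for each nontrivial $\chi^j$, is exactly the check carried out in that proof. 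So there is nothing to compare: you have reproduced the standard argument that the paper merely cites.
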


\section{The Automorphism Group of $\mathcal{C}(L,X)$}\label{sec:main}
In this section we prove Theorem~\ref{trm:1}(ii). As mentioned in Section~\ref{Introduction}, the proof breaks into two cases $q \equiv  1\mod{4}$ and $q \equiv 3\mod{4}$. However, in either case the overall strategy is similar. For $t$ defined in Section~\ref{Preliminary}, let $K$ be the normal subgroup of $A_t$ fixing $\Delta_1(t)$ vertex-wise. Then we investigate the orbits of $K$ on $\Delta_2(t)$, when $q \equiv  1\mod{4}$, and the orbits of $K$ on $\Delta_3(t)$ when $q \equiv 3\mod{4}$.  With this information we are then able to show $t \in \mathrm{Z}(A_t)$, provided 
$q \geq 73$ (for $q \equiv  1\mod{4}$) and $q \geq 67$ (for $q \equiv  3\mod{4}$). Knowing that  $t \in \mathrm{Z}(A_t)$ swiftly leads to pinning down $A$, and the small values of $q$ may be quickly checked using \textsc{Magma}~\cite{magma}.

\subsection{The Case $q \equiv 1 \mod{4}$} 
Since $q \equiv 1 \mod{4}$ we may choose $\iota \in GF(q)$ such that $\iota^2 =-1$.
Along with our involution $t$ we fix $s \in \Delta_1(t)$, so as 
\[
  t =  \begin{bmatrix}    \iota &0 \\ 0 & -\iota   \end{bmatrix}
  \text{ and }
  s = 
  \begin{bmatrix}
    0 & 1 \\
    -1 & 0 
  \end{bmatrix}.
\]
Observe that, by Lemma~\ref{lem:4cycle} we have that every element of $\Delta_2(t)$ is adjacent to a unique   vertex in $\Delta_1(t)$.
Moreover, $A_t \geq G_t$ is transitive on $\Delta_1(t)$.
Hence,  to show that the $K$-orbits of $\Delta_2(t)$ take the form $\{v,v^t\}$ it is sufficient to  show that the $K$-orbits of $\Delta_1(s) \cap \Delta_2(t)$ take this form.     

We first describe  $\Delta_1(s) \cap \Delta_2(t)$ explicitly.
\begin{lemma}
 We have that
  \[
    \Delta_1(s) \cap \Delta_2(t) =
    \left\{
      \begin{bmatrix}
        \sigma & \tau \\ \tau & -\sigma 
      \end{bmatrix}\, \middle\vert \,
      \begin{matrix}
        \sigma, \tau \neq 0 \\
        \sigma^2 + \tau^2  = -1 
      \end{matrix}
    \right\}.
  \]
\end{lemma}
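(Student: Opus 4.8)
The plan is to intersect two explicit descriptions that are already essentially in hand. We have
\[
  t = \begin{bmatrix} \iota & 0 \\ 0 & -\iota \end{bmatrix},
  \qquad
  s = \begin{bmatrix} 0 & 1 \\ -1 & 0 \end{bmatrix},
\]
and by Lemma~\ref{lem:disks}(ii) the elements of $\Delta_2(t)$ are exactly those $X$-matrices of the form $\begin{bmatrix} \sigma & \mu^2\tau \\ \tau & -\sigma \end{bmatrix}$ with $\sigma,\mu,\tau \neq 0$ and $\sigma^2 + \mu^2\tau^2 = -1$. So the only real work is to characterise $\Delta_1(s)$ among arbitrary involutions and then impose membership in $\Delta_2(t)$.

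First I would compute $\Delta_1(s)$ directly. Writing a general element of $X$ as $x = \begin{bmatrix} \alpha & \beta \\ \gamma & -\alpha \end{bmatrix}$ with $\alpha^2 + \beta\gamma = -1$ (the condition for $x$ to be an involution in $X$ as a trace-zero, determinant-one matrix), adjacency of $x$ and $s$ in $\mathcal{C}(L,X)$ means $x \neq s$ and $xs = -sx$ in $SL_2(q)$, using the paper's sign convention. I would substitute $s$ and read off the linear constraints on $\alpha,\beta,\gamma$ produced by $xs = -sx$; by analogy with the $q \equiv 3 \bmod 4$ computation in the proof of Lemma~\ref{lem:disks} (where commuting with $t$ forced $\beta = \gamma$), the relation with $s = \begin{bmatrix} 0 & 1 \\ -1 & 0 \end{bmatrix}$ should force $\beta = \gamma$ as well, so that $\Delta_1(s)$ consists of symmetric trace-zero matrices $\begin{bmatrix} \sigma & \tau \\ \tau & -\sigma \end{bmatrix}$ with $\sigma^2 + \tau^2 = -1$.

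Next I would intersect with $\Delta_2(t)$. An element lies in both discs iff it is simultaneously of the form $\begin{bmatrix} \sigma & \tau \\ \tau & -\sigma \end{bmatrix}$ (from $\Delta_1(s)$) and $\begin{bmatrix} \sigma & \mu^2\tau \\ \tau & -\sigma \end{bmatrix}$ (from $\Delta_2(t)$). Matching entries forces $\mu^2 = 1$, so the off-diagonal square condition $\sigma^2 + \mu^2\tau^2 = -1$ collapses to $\sigma^2 + \tau^2 = -1$, and the constraints $\sigma,\mu,\tau \neq 0$ reduce to $\sigma,\tau \neq 0$. This yields exactly the claimed set. The only point requiring a little care is to confirm that every symmetric matrix satisfying $\sigma^2 + \tau^2 = -1$ with $\sigma,\tau \neq 0$ genuinely lands in $\Delta_2(t)$ (i.e. is at distance $2$, not $1$, from $t$): since such a matrix is neither diagonal nor anti-diagonal it is not in $\Delta_1(t)$ by Lemma~\ref{lem:disks}(i), and $\tau \neq 0$ rules out degenerate cases, so adjacency to $s \in \Delta_1(t)$ places it in $\Delta_2(t)$.

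I expect the main obstacle to be purely bookkeeping: getting the sign convention $xs = -sx$ right and checking that the conditions $\sigma \neq 0$ and $\tau \neq 0$ are the correct nondegeneracy requirements (ensuring the element is in $\Delta_2(t)$ rather than sliding back into $\Delta_1(t)$ or failing to be an involution). There is no deep idea here; the lemma is a direct specialisation of Lemma~\ref{lem:disks}, and once the membership conditions for both discs are written side by side the matching of entries is immediate.
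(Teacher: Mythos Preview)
Your proposal is correct and follows essentially the same route as the paper: compute $\Delta_1(s)$ via the anticommutation $xs=-sx$ to force $\beta=\gamma$, then exclude the degenerate cases $\sigma=0$ (which lands in $\Delta_1(t)$) and $\tau=0$ (which gives $\pm t$). The only cosmetic difference is that the paper argues the nondegenerate symmetric matrices lie in $\Delta_2(t)$ directly from adjacency to $s\in\Delta_1(t)$, rather than by matching against the Lemma~\ref{lem:disks}(ii) parametrization, but this is the same argument in slightly different clothing.
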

\begin{proof}
  Let $x = \begin{bmatrix} \alpha & \beta \\ \gamma & -\alpha \end{bmatrix} \in X$.
  Then $x \in \Delta_1(s)$ if and only if $xs = -sx$ and $x \neq \pm s$.
  The equation $xs+sx=0$ yields $\beta = \gamma$.
  So $x = \begin{bmatrix} \sigma & \tau \\ \tau & -\sigma \end{bmatrix}$ with $\sigma^2 + \tau^2 = -1$.
  However, if $\sigma$ or $\tau =0$ clearly we have $x \in \Delta_1(t)$ or $x= \pm t$ respectively.
  Therefore
  \[
    \Delta_1(s) \cap \Delta_2(t) =
    \left\{
      \begin{bmatrix}
        \sigma & \tau \\ \tau & -\sigma 
      \end{bmatrix}\, \middle\vert \,
      \begin{matrix}
        \sigma, \tau \neq 0 \\
        \sigma^2 + \tau^2  = -1 
      \end{matrix}
    \right\},
  \] as desired.
\end{proof}

Define $s_{\sigma, \tau} = \begin{bmatrix}\sigma & \tau \\\tau&-\sigma\end{bmatrix} \in \Delta_1(s)\cap\Delta_2(t)$ and further set
$E_{\sigma,\tau} = \Delta_2(s_{\sigma,\tau}) \cap \Delta_1(t)$.
Moreover, notice that $s_{\sigma,\tau} = s_{-\sigma,-\tau}$ and $s_{\sigma,\tau}^t = s_{\sigma, -\tau}$.

The next two lemmas form the heart of our argument.   
\begin{lemma}\label{lem:poly1}
  Suppose that $q > 11$ and $t_\omega = \begin{bmatrix} 0 & \omega \\ -1/\omega & 0\end{bmatrix} \in \Delta_1(t)$.
  Then, $t_\omega \in E_{\sigma,\tau}$ if and only if $\omega^4 -(2+4/\tau^2)\omega^2 + 1$ is a non-zero square in $GF(q)$.
\end{lemma}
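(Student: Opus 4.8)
The plan is to directly compute when $t_\omega$ lies in $E_{\sigma,\tau} = \Delta_2(s_{\sigma,\tau}) \cap \Delta_1(t)$. Since $t_\omega \in \Delta_1(t)$ by construction, the membership $t_\omega \in E_{\sigma,\tau}$ reduces to asking whether $d(t_\omega, s_{\sigma,\tau}) = 2$, i.e. whether $t_\omega$ and $s_{\sigma,\tau}$ have a common neighbour in $\mathcal{C}(L,X)$. By Lemma~\ref{lem:eigenvalues} (in the $q \equiv 1 \bmod 4$ case) this is equivalent to $t_\omega s_{\sigma,\tau}$ having two distinct eigenvalues, together with $d(t_\omega, s_{\sigma,\tau}) \neq 0, 1$. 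So first I would form the product
\[
  t_\omega s_{\sigma,\tau} =
  \begin{bmatrix} 0 & \omega \\ -1/\omega & 0 \end{bmatrix}
  \begin{bmatrix} \sigma & \tau \\ \tau & -\sigma \end{bmatrix}
  =
  \begin{bmatrix} \omega\tau & -\omega\sigma \\ -\sigma/\omega & -\tau/\omega \end{bmatrix}.
\]

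**From eigenvalues to a discriminant condition.** For a trace-zero-or-general $2\times 2$ matrix $M$ over $GF(q)$, having two \emph{distinct} eigenvalues is governed by its discriminant $\operatorname{tr}(M)^2 - 4\det(M)$ being a nonzero square. I would compute $\operatorname{tr}(t_\omega s_{\sigma,\tau}) = \omega\tau - \tau/\omega = \tau(\omega^2-1)/\omega$ and $\det(t_\omega s_{\sigma,\tau}) = \det(t_\omega)\det(s_{\sigma,\tau})$; since $t_\omega, s_{\sigma,\tau} \in SL_2(q)$ up to the usual scalar normalisation, I expect $\det$ to be a clean constant that interacts with the relation $\sigma^2 + \tau^2 = -1$. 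The discriminant then becomes
\[
  \Delta = \frac{\tau^2(\omega^2-1)^2}{\omega^2} - 4\det(t_\omega s_{\sigma,\tau}).
\]
Substituting $\sigma^2 = -1 - \tau^2$ to eliminate $\sigma$, and clearing the denominator $\omega^2$, I would expect $\omega^2 \Delta$ to be a scalar multiple of $\tau^2\bigl(\omega^4 - (2 + 4/\tau^2)\omega^2 + 1\bigr)$. Since $\tau \neq 0$, $\tau^2$ is a nonzero square, so $\Delta$ is a nonzero square precisely when $\omega^4 - (2+4/\tau^2)\omega^2 + 1$ is a nonzero square, which is exactly the claimed criterion.

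**The two sides of the equivalence.** To get a genuine \emph{if and only if}, I would argue both directions through Lemma~\ref{lem:eigenvalues}: $t_\omega \in E_{\sigma,\tau}$ forces $d(t_\omega, s_{\sigma,\tau}) = 2$, hence the product has distinct eigenvalues, hence the discriminant (equivalently the quartic expression) is a nonzero square; conversely, if the expression is a nonzero square then the product has two distinct eigenvalues, giving $d(t_\omega, s_{\sigma,\tau}) \leq 2$, and I must separately rule out distance $0$ and $1$ to conclude the distance is exactly $2$. Distance $0$ means $t_\omega = s_{\sigma,\tau}$, impossible since the off-diagonal entries of $s_{\sigma,\tau}$ are equal while those of $t_\omega$ are not (as $\sigma, \tau \neq 0$); distance $1$ would put $t_\omega \in \Delta_1(s_{\sigma,\tau})$, but both already lie in the distinct disc $\Delta_1(t)$, and Lemma~\ref{lem:4cycle} limits common structure, so this case should be excluded cleanly.

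**Anticipated obstacle.** The main technical obstacle is the algebraic bookkeeping: correctly tracking the scalar normalisation between $SL_2(q)$ representatives and their images in $G = PGL_2(q)$, and verifying that after substituting $\sigma^2 + \tau^2 = -1$ the discriminant collapses to exactly $\tau^2\bigl(\omega^4 - (2+4/\tau^2)\omega^2 + 1\bigr)$ up to a square factor, with no stray sign or factor of $4$ spoiling the ``nonzero square'' conclusion. The condition ``$\det \neq 0$'' and the requirement that the square be \emph{nonzero} (ruling out a repeated eigenvalue) both need care, since a zero discriminant corresponds exactly to distance $3$ rather than distance $2$. Once the discriminant is shown to factor as claimed, the square/non-square dictionary makes the equivalence immediate.
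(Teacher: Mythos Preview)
Your approach is essentially identical to the paper's: both invoke Lemma~\ref{lem:eigenvalues} to reduce membership in $E_{\sigma,\tau}$ to the product $t_\omega s_{\sigma,\tau}$ having two distinct eigenvalues, compute the discriminant $\tau^2(\omega^{-1}-\omega)^2 - 4$ of its characteristic polynomial, and then multiply by the nonzero square $\omega^2/\tau^2$ to obtain the quartic $\omega^4 - (2+4/\tau^2)\omega^2 + 1$. The paper does not separately discuss the exclusion of distances $0$ and $1$ that you mention (and your phrasing there is slightly garbled, since $s_{\sigma,\tau}\in\Delta_2(t)$, not $\Delta_1(t)$; the correct argument is that $s$ is the unique neighbour of $s_{\sigma,\tau}$ in $\Delta_1(t)$ by Lemma~\ref{lem:4cycle}), but this is a cosmetic point that does not affect the subsequent use of the lemma.
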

\begin{proof}
  By Lemma~\ref{lem:eigenvalues}, we have that $t_\omega \in E_{\sigma,\tau}$ if and only if $t_\omega s_{\sigma,\tau}$ has two distinct eigenvalues.
  Observe that the characteristic polynomial of $t_\omega s_{\sigma,\tau}$ is given by
  \[
    \chi(x) =
    1 + (\omega^{-1} -\omega)\tau x  + x^2 
  \]  which has discriminant
  \[
    \phi(\omega) = \tau^2(\omega^{-1}-\omega)^2 -4.
  \]
  Now  $\phi(\omega)$ is a non-zero square in $GF(q)$ if and only if
  \[
    \frac{\omega^2}{\tau^2} \phi(\omega)
    =
    \omega^4 -(2+4/\tau^2)\omega^2 + 1
  \] is a non-zero square in $GF(q)$.
  This proves the lemma.
\end{proof}

\begin{lemma}\label{lem:bound1}
 Assume that $q \geq 73$. Then $E_{\rho,\mu} = E_{\sigma,\tau}$ if and only if $(\rho,\mu) = (\pm\sigma, \pm \tau)$.
\end{lemma}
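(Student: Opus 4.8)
The plan is to reduce the whole statement to a single character-sum estimate coming from Theorem~\ref{Solutions}. Write $c = 2 + 4/\tau^2$ and $c' = 2 + 4/\mu^2$, and set $F_c(\omega) = \omega^4 - c\omega^2 + 1$. By Lemma~\ref{lem:poly1}, $t_\omega \in E_{\sigma,\tau}$ precisely when $F_c(\omega)$ is a non-zero square, so whether a given vertex of $\Delta_1(t)$ lies in $E_{\sigma,\tau}$ depends only on $c$, equivalently only on $\tau^2$. This immediately gives the easy direction: if $(\rho,\mu) = (\pm\sigma,\pm\tau)$ then $\mu^2 = \tau^2$, hence $c' = c$ and $E_{\rho,\mu} = E_{\sigma,\tau}$. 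Conversely, since $\sigma^2 + \tau^2 = -1 = \rho^2 + \mu^2$, the equality $\mu^2 = \tau^2$ forces $\rho^2 = \sigma^2$, i.e. $(\rho,\mu) = (\pm\sigma,\pm\tau)$. So the whole lemma comes down to proving that $E_{\sigma,\tau} = E_{\rho,\mu}$ forces $c = c'$.

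First I would record the elementary facts that make Theorem~\ref{Solutions} applicable. Because $\sigma,\tau \neq 0$ and $\sigma^2 + \tau^2 = -1$, one checks $c \neq \pm 2$ (for instance $c = -2$ would give $\tau^2 = -1$, i.e. $\sigma = 0$), so the quartic $F_c$ has four distinct roots in $\overline{GF(q)}$, and likewise $F_{c'}$. If $c \neq c'$ then $F_c$ and $F_{c'}$ share no root, since a common root $\omega$ satisfies $(c'-c)\omega^2 = 0$, forcing $\omega = 0$, which is not a root as $F_c(0) = 1$. Hence $F_cF_{c'}$ is separable of degree $8$ with $8$ distinct roots; in particular it is not a constant times a square, so $y^2 - F_c(\omega)F_{c'}(\omega)$ is absolutely irreducible. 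Theorem~\ref{Solutions} then applies with $m = k = 2$ and $d = 8$, giving
\[
  \Bigl| \sum_{\omega \in GF(q)} \chi\bigl(F_c(\omega)F_{c'}(\omega)\bigr) \Bigr| \leq 7\sqrt{q},
\]
where $\chi$ denotes the quadratic character of $GF(q)$ with $\chi(0) = 0$.

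The heart of the argument is to contradict this bound. Suppose $E_{\sigma,\tau} = E_{\rho,\mu}$ but $c \neq c'$. Equality of these vertex sets says that for every $\omega \neq 0$ we have $\chi(F_c(\omega)) = 1 \iff \chi(F_{c'}(\omega)) = 1$. For any $\omega$ that is not a root of $F_cF_{c'}$, both characters lie in $\{\pm 1\}$, so in fact $\chi(F_c(\omega)) = \chi(F_{c'}(\omega))$ and therefore $\chi\bigl(F_c(\omega)F_{c'}(\omega)\bigr) = 1$; at the at-most-eight roots the product character is $0$. Consequently
\[
  \sum_{\omega \in GF(q)} \chi\bigl(F_c(\omega)F_{c'}(\omega)\bigr) \geq q - 8.
\]
Combining this with the Weil bound above yields $q - 8 \leq 7\sqrt{q}$, i.e. $(\sqrt{q} - 8)(\sqrt{q} + 1) \leq 0$, so $q \leq 64$. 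As $q \geq 73$ this is a contradiction, forcing $c = c'$ and completing the only-if direction.

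The step I expect to carry the real content is the passage from set equality to the pointwise identity $\chi(F_c(\omega)) = \chi(F_{c'}(\omega))$ off the root locus: this is exactly what converts a purely combinatorial hypothesis into a character sum that is pinned near $q$, in flat contradiction with the square-root cancellation guaranteed by Theorem~\ref{Solutions}. The only genuinely fiddly bookkeeping is the absolute-irreducibility check, namely that $F_cF_{c'}$ has eight distinct roots, which rests on the two observations that $c,c' \neq \pm 2$ and that $F_c,F_{c'}$ are coprime when $c \neq c'$; the rest is routine. It is worth noting that this argument also explains the threshold: the inequality $q - 8 \leq 7\sqrt{q}$ fails precisely when $q > 64$, and $73$ is the least prime power with $q \equiv 1 \pmod{4}$ exceeding $64$.
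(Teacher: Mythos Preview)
Your argument is correct and follows essentially the same route as the paper's proof: both reduce to showing that the product $F_cF_{c'}$ (the paper writes $f_\gamma f_\delta$) is separable of degree $8$ when $c\neq c'$, invoke Theorem~\ref{Solutions} with $k=2$, $d=8$, and derive $q\leq 64$. The only cosmetic difference is that you phrase the Weil bound as a quadratic-character sum, whereas the paper counts solutions $N$ to $y^2=f(x)$ directly; since $N-q=\sum_\omega\chi(f(\omega))$ these are the same inequality.
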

\begin{proof}
  Fix $s_{\sigma,\tau}$ and $s_{\rho,\mu}$ in $\Delta_1(s) \cap \Delta_2(t)$.
  Notice that, as $s_{\sigma,\tau}^t = s_{\sigma,-\tau}$ we have that $E_{\sigma, \tau} = E_{\pm \sigma, \pm\tau}$.

  Assume, for a contradiction, that $(\rho,\mu) \neq (\pm \sigma, \pm \tau)$ and $E_{\rho,\mu}  = E_{\sigma, \tau}$.
  Define $\gamma = -2-4/\tau^2$, $\delta =-2-4/\mu^2$, $f_{\gamma}(\omega) = \omega^4 + \gamma\omega^2 + 1$, and $f_{\delta}(\omega) = \omega^4+ \delta\omega^2 + 1$.
  Then, by Lemma~\ref{lem:poly1} $f_{\gamma}(\omega)$ is a non-zero square if and only  if $f_{\delta}(\omega)$ is a non-zero square.
 Therefore, $f(\omega) := f_{\gamma}(\omega)f_{\delta}(\omega)$ is a (possibly zero) square for all $\omega \in GF(q)$.

  We now verify that $f$ is not the square of a polynomial in $\overline{GF(q)}[\omega]$ and thus show that the polynomial $y^2 -f(x) \in GF(q)[x,y]$ is absolutely irreducible.
  Observe that  any common factor of $f_{\gamma}$ and $f_{\delta}$ is a factor of $(f_{\gamma} - f_{\delta})(\omega)=(\gamma - \delta) \omega^2$, 
  and hence  $f_{\gamma}$ and $f_{\delta}$ are coprime.
  Moreover, as $\gamma,\delta \neq \pm 2$, we have that $f_{\gamma}$ and $f_{\delta}$ are  separable. 
  Thus, $f$ has no repeated linear factors and so cannot be a square in $\overline{GF(q)}[\omega]$.
  Hence, $y^2-f(x)\in GF(q)[x,y]$ is absolutely irreducible.

  Define $N = |\{ (x,y) \in GF(q)^2 \mid y^2= f(x) \}|$  and observe that, as $\deg{f} =8$, we have $N \geq 2q-8$.
Applying Theorem~\ref{Solutions} yields that $|N-q| \leq 7\sqrt{q}$ and hence 
                   $$q^2 -65q + 64=(q-1)(q-64) \leq 0,$$
                   whereby  $q \leq 64$.
                   However, by assumption $q \geq 73$  which gives a contradiction.
\end{proof}

We require one more lemma before we can prove the $q\equiv 1\mod{4}$ part of Theorem~\ref{trm:1}.
\begin{lemma}\label{lem:faithful1}
  If $q \geq 17$, then $A_t$ acts faithfully on $\Delta_2(t)$.
\end{lemma}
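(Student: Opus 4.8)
The plan is to set $N$ equal to the kernel of the action of $A_t$ on $\Delta_2(t)$, that is, the subgroup fixing $\Delta_2(t)$ vertex-wise, and to prove $N = 1$; this is precisely the assertion of faithfulness. Since $q \geq 17$ and $q \equiv 1 \bmod 4$, Lemma~\ref{lem:disks} tells us $\mathcal{C}(L,X)$ has diameter $3$, so $X$ is the disjoint union $\{t\} \cup \Delta_1(t) \cup \Delta_2(t) \cup \Delta_3(t)$. Any $g \in N$ fixes $t$ and stabilises each disc, so to conclude $g = 1$ it is enough to show that $g$ fixes $\Delta_1(t)$ and $\Delta_3(t)$ pointwise.

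For $\Delta_1(t)$ I would exploit the observation recorded before this lemma (a consequence of Lemma~\ref{lem:4cycle}) that each $w \in \Delta_2(t)$ has a unique neighbour $\pi(w)$ in $\Delta_1(t)$. As $g$ fixes $w$ and preserves adjacency, it fixes $\pi(w)$, and hence $g$ fixes every vertex in the image of $\pi$. This image is all of $\Delta_1(t)$: a vertex $v \in \Delta_1(t)$ has $(q-1)/2$ neighbours, all contained in $\{t\} \cup \Delta_1(t) \cup \Delta_2(t)$, exactly one of which is $t$ and, by Lemma~\ref{lem:4cycle}, at most one of which lies in $\Delta_1(t)$; so at least $(q-5)/2 > 0$ of them lie in $\Delta_2(t)$, whence $v = \pi(w)$ for some $w$. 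Thus $g$ fixes $\Delta_1(t)$ pointwise.

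For $\Delta_3(t)$ the engine is again $4$-cycle-freeness. Let $u \in \Delta_3(t)$; by the triangle inequality and diameter $3$, every neighbour of $u$ lies in $\Delta_2(t) \cup \Delta_3(t)$. Each neighbour of $u$ in $\Delta_2(t)$ is fixed by $g$ and so is a common neighbour of $u$ and $g(u)$. If $u$ has two such neighbours $w_1 \neq w_2$, then $g(u) = u$, since otherwise $u, w_1, g(u), w_2$ would form a $4$-cycle, contradicting Lemma~\ref{lem:4cycle}. The step therefore reduces to the single combinatorial claim that every $u \in \Delta_3(t)$ has at least two neighbours in $\Delta_2(t)$; that it has at least one is automatic, as the penultimate vertex of a geodesic from $t$ to $u$ supplies one.

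This claim is the main obstacle. I would attack it via the eigenvalue test of Lemma~\ref{lem:eigenvalues}: a neighbour $w$ of $u$ lies in $\Delta_2(t)$ precisely when $tw$ has two distinct eigenvalues, which translates into the discriminant of the characteristic polynomial of $tw$ being a nonzero square in $GF(q)$. Parametrising $\Delta_1(u)$ by a single field parameter turns the count of $\Delta_2(t)$-neighbours of $u$ into a count of parameter values at which an explicit polynomial is a nonzero square; using the orbit description of $\Delta_3(t)$ under $G_t = C_G(t) \le A_t$ from Lemma~\ref{lem:disks} reduces this to finitely many representatives, and bounding the number of square values by Theorem~\ref{Solutions}, exactly as in Lemmas~\ref{lem:poly1} and~\ref{lem:bound1}, forces at least two such $w$ once $q \geq 17$. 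Carefully tracking the square/non-square bookkeeping here, rather than the structural skeleton above, is where the real work lies.
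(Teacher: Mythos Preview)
Your skeleton is correct and your treatment of $\Delta_1(t)$ is exactly right (indeed more careful than the paper, which leaves that step implicit). The divergence is in how $\Delta_3(t)$ is handled.

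You reduce to the claim that \emph{every} $u\in\Delta_3(t)$ has at least two neighbours in $\Delta_2(t)$, and propose to verify this by the Weil-type bound of Theorem~\ref{Solutions}. The paper avoids this entirely. Using Proposition~2.4 of \cite{comgraphspeciallinear}, it singles out the upper-triangular involutions
$u=\begin{bmatrix}\iota&\alpha\\0&-\iota\end{bmatrix}\in\Delta_3(t)$ and reads off the exact count
$|\Delta_1(u)\cap\Delta_2(t)|=(q-5)/4$, which is $\geq 3$ once $q\geq 17$; hence each such $u$ is fixed by $4$-cycle freeness. Then, again by that proposition, every $v\in\Delta_3(t)$ is adjacent to one of these upper-triangular $u$'s, and of course to some $v'\in\Delta_2(t)$ (penultimate vertex on a geodesic). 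Thus $v$ already has two \emph{fixed} neighbours ($u$ and $v'$), so $v$ is fixed. This two-stage bootstrap never asserts that an arbitrary $u\in\Delta_3(t)$ has two $\Delta_2(t)$-neighbours.

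What this buys: the paper gets a sharp structural count for a single convenient family and then propagates, yielding the threshold $q\geq 17$ on the nose. Your route, pushing the discriminant through Theorem~\ref{Solutions} for representatives of the $G_t$-orbits on $\Delta_3(t)$, is sound in principle, but the resulting error term $(d-1)\sqrt{q}$ for a degree-$d$ discriminant polynomial is already comparable to the whole count when $q=17$; you are unlikely to reach the stated threshold this way, and would also have to check absolute irreducibility across several orbit types. If you wish to keep your approach, the cleanest fix is precisely the paper's bootstrap: prove the $\geq 2$ claim only for the upper-triangular representatives (where the count is exact), and then use that every other element of $\Delta_3(t)$ is adjacent to one of them.
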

\begin{proof}
  Suppose that $g \in A_t$ fixes $\Delta_2(t)$ vertex-wise.
  By Proposition~2.4 in \cite{comgraphspeciallinear} each element of $\Delta_2(t) \cup \Delta_3(t)$ is adjacent to a unique element of the form
  $\begin{bmatrix} \iota & \alpha \\0&-\iota\end{bmatrix}$ for $\alpha \neq 0$.
  It follows that if $u = \begin{bmatrix} \iota & \alpha \\ 0&-\iota\end{bmatrix} \in \Delta_3(t)$ then $|\Delta_1(u) \cap \Delta_2(t)| =\frac{q-5}{4}$.
  Since $q \geq 17$, we have that $|\Delta_1(u)\cap\Delta_2(t)| \geq 2$.
  Then, by Lemma~\ref{lem:4cycle} we must have $u^g = u$.
  Suppose now that $v \in \Delta_1(u) \cap \Delta_3(t)$.
  Then, as $v \in \Delta_3(t)$, we have $v' \in \Delta_2(t) \cap \Delta_1(v)$.
  So $v$ and $v^g$ both have $u$ and $v'$ as neighbours.
  Thus, Lemma~\ref{lem:4cycle} yields that $v = v^g$.
  It follows that $g = 1$ and $A_t$ acts faithfully on $\Delta_2(t)$.
\end{proof}

We can now prove Theorem~\ref{trm:1}(ii) when $q$ is large enough.
\begin{lemma}\label{lem:1}
  If $q \geq 73$, then $A = P\Gamma L_2(q)$ holds.
\end{lemma}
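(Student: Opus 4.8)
The plan is to combine the structural results already proved into a proof that $t \in \mathrm{Z}(A_t)$, from which control of $A$ follows quickly via the factorisation $A = A_t L$. First I would observe that, by Lemma~\ref{lem:faithful1}, $A_t$ acts faithfully on $\Delta_2(t)$, so it suffices to understand how $A_t$ permutes $\Delta_2(t)$. Recall $K \normal A_t$ is the kernel of the action on $\Delta_1(t)$. The crucial point is to pin down the $K$-orbits on $\Delta_2(t)$: since the graph is $4$-cycle free (Lemma~\ref{lem:4cycle}), every vertex of $\Delta_2(t)$ is adjacent to a \emph{unique} vertex of $\Delta_1(t)$, so $K$ preserves the partition of $\Delta_2(t)$ by nearest $\Delta_1(t)$-neighbour; as $K$ fixes $\Delta_1(t)$ pointwise, it preserves each fibre $\Delta_1(s) \cap \Delta_2(t)$ (for each $s \in \Delta_1(t)$) setwise.

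Next I would exploit Lemma~\ref{lem:bound1}. For $s_{\sigma,\tau} \in \Delta_1(s) \cap \Delta_2(t)$, the set $E_{\sigma,\tau} = \Delta_2(s_{\sigma,\tau}) \cap \Delta_1(t)$ is a combinatorial invariant attached to the vertex $s_{\sigma,\tau}$, and Lemma~\ref{lem:bound1} says (for $q \geq 73$) that $E_{\rho,\mu} = E_{\sigma,\tau}$ forces $(\rho,\mu) = (\pm\sigma,\pm\tau)$, i.e.\ $s_{\rho,\mu} \in \{s_{\sigma,\tau}, s_{\sigma,\tau}^t\}$. Any $g \in A_t$ fixing $\Delta_1(t)$ pointwise must send $s_{\sigma,\tau}$ to another vertex with the \emph{same} set of $\Delta_1(t)$-neighbours at distance $2$ (because $g$ fixes those neighbours), so $g$ permutes each pair $\{s_{\sigma,\tau}, s_{\sigma,\tau}^t\}$. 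This identifies the $K$-orbits on $\Delta_1(s) \cap \Delta_2(t)$ as exactly the pairs $\{v, v^t\}$, and by the transitivity of $G_t \leq A_t$ on $\Delta_1(t)$ (noted just before the lemma) the same holds on all of $\Delta_2(t)$: the $K$-orbits are the pairs $\{v, v^t\}$.

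Having the $K$-orbits on $\Delta_2(t)$ be precisely $\{v, v^t\}$, I would argue that $t$ induces on $\Delta_2(t)$ the unique non-trivial element common to all these size-$\leq 2$ orbit pairings, and hence that conjugation by $t$ lies in the centre of the permutation action of $A_t/K$ or, more directly, that $t$ centralises $A_t$. The cleanest route is: the map $v \mapsto v^t$ is the non-identity element of $K$-orbit structure, it commutes with the $A_t$-action on orbit-blocks, and faithfulness on $\Delta_2(t)$ then promotes this to $t \in \mathrm{Z}(A_t)$. Once $t$ is central in $A_t$, for any $g \in A$ write $g = a\ell$ with $a \in A_t$, $\ell \in L$ by Lemma~\ref{lem:factorisation}; since $t$ is a vertex, $t^a = t$, and centrality pins the conjugation action so that $A_t$ normalises $\langle t \rangle$ compatibly across the transitive $L$-action, yielding that $A$ acts on $X$ as a group of automorphisms preserving the $L$-conjugacy structure. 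Comparing orders with the known embedding $\auto{L} \cong P\Gamma L_2(q) \hookrightarrow A$ forces equality $A = P\Gamma L_2(q)$.

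I expect the main obstacle to be the deduction $t \in \mathrm{Z}(A_t)$ from the orbit description, rather than the orbit description itself. Knowing the $K$-orbits are the pairs $\{v, v^t\}$ gives a well-defined involution $v \mapsto v^t$ on $\Delta_2(t)$, but showing this \emph{agrees} with conjugation by $t$ as an element of $A_t$ and that it is \emph{central} requires checking that every $g \in A_t$ commutes with this pairing and then lifting from the faithful $\Delta_2(t)$-action back to $A_t$; the subtlety is ensuring $g \in A_t$ genuinely preserves each unordered pair $\{v, v^t\}$ (which uses that $g$ fixes the common $\Delta_1(t)$-neighbour and Lemma~\ref{lem:4cycle}), and that no element swaps the two members in a way inconsistent with $t$ being a single well-defined central involution. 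The order comparison at the end is routine given $|P\Gamma L_2(q)|$ and the established containment $P\Gamma L_2(q) \leq A$.
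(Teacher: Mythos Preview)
Your overall strategy matches the paper's: determine the $K$-orbits on $\Delta_2(t)$ via Lemma~\ref{lem:bound1}, deduce $t \in \mathrm{Z}(A_t)$, and then finish via $A = A_t L$. Two points need repair, however.

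First, in your passage from ``$K$-orbits are $\{v,v^t\}$'' to ``$t \in \mathrm{Z}(A_t)$'', your parenthetical justification is wrong: a general $g \in A_t$ does \emph{not} fix the common $\Delta_1(t)$-neighbour of $v$ and $v^t$ --- only elements of $K$ do. What you actually need is that $K \trianglelefteq A_t$, so every $g \in A_t$ permutes the $K$-orbits; since each orbit has size exactly two (here one must note, as the paper does, that $t$ has no fixed points on $\Delta_2(t)$), this forces $(v^t)^g = (v^g)^t$ for all $v$, whence $[g,t]$ is trivial on $\Delta_2(t)$ and Lemma~\ref{lem:faithful1} gives $t \in \mathrm{Z}(A_t)$. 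The paper reaches the same conclusion by a slightly slicker route: it observes that $t$ is the \emph{unique} element of $K$ acting fixed-point-freely on $\Delta_2(t)$ (any other such $g$ would have $gt$ fixing $\Delta_2(t)$ pointwise, hence $g=t$ by faithfulness), so $\langle t \rangle$ is characteristic in $K$ and therefore normal in $A_t$.

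Second, your endgame ``comparing orders'' does not work as stated --- you do not know $|A|$ in advance. The paper's argument is the one you should give: from $\langle t\rangle \trianglelefteq A_t$ and $A = A_t L$ one gets $t^A = t^L = X$ as a subset of $A$, so $L = \langle X \rangle$ is normal in $A$; then $C_A(L)=1$ (faithful action on $X$) embeds $A$ into $\mathrm{Aut}(L) \cong P\Gamma L_2(q)$, and the reverse containment was already observed, giving equality.
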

\begin{proof}
  Suppose that $q \geq 73$ and that  $K \normal A_t$  is the vertex-wise stabilizer of $\Delta_1(t)$ in $A_t$.
  By Lemma~\ref{lem:bound1} we have that the $K$-orbits of $\Delta_1(s) \cap \Delta_2(t)$ all take the form $\{v, v^t\}$.
  As $A_t$ acts transitively on $\Delta_1(t)$, this implies that $K$ partitions  $\Delta_2(t)$ into orbits of the form $\{v,v^t\}$.
  Moreover, observe that $t \in K$ fixes no vertices of $\Delta_2(t)$.
  Suppose that $g \in K$ also enjoys this property so that $gt$ fixes $\Delta_2(t)$ vertex-wise.
  Hence, by Lemma~\ref{lem:faithful1}, we have $g = t$.
  That is, $t$ is the unique element of $K$ fixing no vertices of $\Delta_2(t)$ and hence $\langle t \rangle \normal A_t$,
  whereby $t^A = t^L$ by Lemma~\ref{lem:factorisation}.
  As $t^L$ generates $L$ it follows that $L \normal A$.
  Moreover, clearly $C_A(L) = 1$ as $A$ acts faithfully on the graph $\com(L,X)$.
  It then follows by the fact that $\auto{L} \cong P\Gamma L_2(q) \leq A$ that $A  = P\Gamma L_2(q)$.
  This proves the result. 
\end{proof}

\subsection{The Case $q \equiv 3 \mod{4}$}
Suppose that $q \equiv 3\mod{4}$ and let
\[
  t =
  \begin{bmatrix}
    0 & 1 \\ -1 & 0
  \end{bmatrix} \in X 
\] be our fixed involution.

Again, let $K \normal A_t$ be the subgroup of $A_t$ fixing $\Delta_1(t)$ vertex-wise.
We argue to show that the $K$-orbits of $\Delta_3(t)$ all take the form $\{v,v^t\}$.

Define $s_{\alpha,\omega} \in \Delta_3(t)$ to be the involution with parameters $\alpha$ and $\omega$ 
and define $E_{\alpha,\omega} = \Delta_2(s_{\alpha,\omega}) \cap \Delta_1(t)$.
Recall from the proof of Lemma~\ref{lem:disks} that every $s_{\alpha,\omega}$ is contained in  a $G_t$-orbit represented by $s_{0,\omega}$.
As $K$ is normal in $A_t$, it is therefore sufficient to show that the $K$-orbit containing $s_{0,\omega}$ takes the form $\{ s_{0,\omega}, s_{0,\omega}^t \}$.

\begin{lemma}\label{lem:poly2}
 Assume that $q > 11$. The vertex $\begin{bmatrix}\sigma&\tau\\\tau &-\sigma\end{bmatrix} \in \Delta_1(t)$ belongs to $E_{\alpha,\omega}$ if and only if
  \begin{align*}
    & 4\alpha(\alpha^2 - 1)(\omega^2 + \omega^{-2}-2)\sigma\tau \\
    &+ (\alpha^4-6\alpha^2 +1)(\omega^2+\omega^{-2} -2) \tau^2\\
    &-4(\alpha^4 + \alpha^2\omega^2 + \alpha^2\omega^{-2} +1)
  \end{align*}
  is not a non-zero square in $GF(q)$.
\end{lemma}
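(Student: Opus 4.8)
The plan is to reduce the membership condition $v \in E_{\alpha,\omega}$ to a statement about the discriminant of a characteristic polynomial, and then to match that discriminant to the displayed expression by a direct computation. Writing $v = \begin{bmatrix} \sigma & \tau \\ \tau & -\sigma \end{bmatrix}$ for the vertex in question, I first observe that since $v \in \Delta_1(t)$ and $s_{\alpha,\omega} \in \Delta_3(t)$, the triangle inequality gives $d(v,s_{\alpha,\omega}) \geq d(t,s_{\alpha,\omega}) - d(t,v) = 2$; hence $v$ lies in $E_{\alpha,\omega} = \Delta_2(s_{\alpha,\omega}) \cap \Delta_1(t)$ if and only if $d(v,s_{\alpha,\omega}) \leq 2$. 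As $q \equiv 3 \mod{4}$, Lemma~\ref{lem:eigenvalues} then recasts this as the condition that $v s_{\alpha,\omega}$ does \emph{not} have two distinct eigenvalues.

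Next I would exploit that both factors have determinant $1$. Indeed $\det{v} = -\sigma^2 - \tau^2 = 1$ by the relation $\sigma^2 + \tau^2 = -1$, while a short calculation shows the bracketed matrix defining $s_{\alpha,\omega}$ has determinant $(1+\alpha^2)^2$, so that $\det{s_{\alpha,\omega}} = 1$. Consequently $\det{vs_{\alpha,\omega}} = 1$, and the characteristic polynomial of $vs_{\alpha,\omega}$ is $x^2 - \mathrm{tr}(vs_{\alpha,\omega})\,x + 1$, with discriminant $\mathrm{tr}(vs_{\alpha,\omega})^2 - 4$. A matrix of determinant $1$ has two distinct eigenvalues in $GF(q)$ exactly when this discriminant is a non-zero square, so $v \in E_{\alpha,\omega}$ if and only if $\mathrm{tr}(vs_{\alpha,\omega})^2 - 4$ is not a non-zero square.

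It then remains only to compute the trace and simplify. Direct multiplication gives
\[
  \mathrm{tr}(vs_{\alpha,\omega}) = \frac{\omega^{-1} - \omega}{1+\alpha^2}\bigl( 2\sigma\alpha + \tau(\alpha^2 - 1) \bigr).
\]
Because $q \equiv 3 \mod{4}$, the element $-1$ is a non-square, so $1 + \alpha^2 \neq 0$ and $(1+\alpha^2)^2$ is a non-zero square; multiplying the discriminant by it therefore preserves the property of being a non-zero square. Expanding $(1+\alpha^2)^2\bigl(\mathrm{tr}(vs_{\alpha,\omega})^2 - 4\bigr)$, using $(\omega^{-1}-\omega)^2 = \omega^2 + \omega^{-2} - 2$, substituting $\sigma^2 = -1 - \tau^2$ to eliminate $\sigma^2$, and collecting terms by their degree in $\tau$ produces precisely the three-line expression in the statement. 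This last algebraic reduction is the only laborious step; the only points genuinely requiring care are the determinant computation for $s_{\alpha,\omega}$ and the observation that $1+\alpha^2$ is a non-zero square, both of which hinge on $q \equiv 3 \mod{4}$.
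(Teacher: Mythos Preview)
Your proposal is correct and follows essentially the same approach as the paper: both reduce the membership condition, via Lemma~\ref{lem:eigenvalues}, to whether the discriminant of the characteristic polynomial of $vs_{\alpha,\omega}$ is a non-zero square, and then clear the denominator $1+\alpha^2$ before expanding. The only cosmetic difference is that the paper scales the matrix by $1+\alpha^2$ before taking the characteristic polynomial, whereas you compute the trace first and then multiply the discriminant by $(1+\alpha^2)^2$; your version is slightly more explicit in justifying both that $d(v,s_{\alpha,\omega})\geq 2$ and that $1+\alpha^2\neq 0$ when $q\equiv 3\pmod 4$.
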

\begin{proof}
  By Lemma~\ref{lem:eigenvalues}, we have $z=\begin{bmatrix}\sigma&\tau\\\tau &-\sigma\end{bmatrix} \in \Delta_1(t)$ belongs to $E_{a,\omega}$ if and only if $zs_{a,\omega}$
  does not have two distinct eigenvalues.
  Moreover, multiplying by a non-zero scalar does not change the number of distinct eigenvalues a matrix has.
  Whereby, $zs_{\alpha,\omega}$ has two distinct eigenvalues if and only if $(1+\alpha^2)zs_{\alpha,\omega}$ has two distinct eigenvalues.
  The discriminant of the characteristic polynomial of $(1+\alpha^2)zs_{\alpha,\omega}$ is given by the stated polynomial. This proves the lemma.
%  \begin{align*}
  %& 4\alpha(\alpha^2 - 1)(\omega^2 + \omega^{-2}-2)\sigma\tau \\
   % &+ (\alpha^4-6\alpha^2 +1)(\omega^2+\omega^{-2} -2) \tau^2\\
   % &-4(\alpha^4 + \alpha^2\omega^2 + \alpha^2\omega^{-2} +1).
  %\end{align*}

\end{proof}

\begin{lemma}\label{lem:bound2}
  If $q \geq 67$, then $E_{\alpha,\omega}=E_{\alpha',\omega'}$ if and only if $s_{\alpha',\omega'} =s_{\alpha,\omega}$ or $s_{\alpha',\omega'} = s_{\alpha,\omega}^t$.
\end{lemma}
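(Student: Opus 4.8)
The plan is to mirror the strategy of Lemma~\ref{lem:bound1}: use Lemma~\ref{lem:poly2} to convert the set equality $E_{\alpha,\omega}=E_{\alpha',\omega'}$ into a statement that a single polynomial over $GF(q)$ takes square values everywhere, and then play this off against Theorem~\ref{Solutions}. As a preliminary simplification I would reduce to the case where one of the two involutions is $t_\omega$. Recall from the proof of Lemma~\ref{lem:disks} that $s_{\alpha,\omega}$ lies in the $G_t$-orbit of $t_\omega=s_{0,\omega}$, so choose $g\in G_t=C_G(t)$ with $s_{\alpha,\omega}^{\,g}=t_\omega$. Since $g\in A_t$ permutes the discs of $t$ and preserves distances, $E_{\alpha,\omega}^{\,g}=E_{0,\omega}$ and $E_{\alpha',\omega'}^{\,g}=E_{\beta,\omega'}$, where $s_{\beta,\omega'}:=s_{\alpha',\omega'}^{\,g}$ again lies in the $G_t$-orbit of $t_{\omega'}$. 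The crucial point is that every element of $G_t$ \emph{centralises} $t$, so conjugation by $g$ commutes with conjugation by $t$; hence $\{s_{\alpha,\omega},s_{\alpha,\omega}^{\,t}\}^{g}=\{t_\omega,t_\omega^{\,t}\}$, and as $t_\omega^{\,t}=t_{\omega^{-1}}$ the whole lemma is equivalent to the cleaner assertion that $E_{0,\omega}=E_{\beta,\omega'}$ if and only if $s_{\beta,\omega'}\in\{t_\omega,t_{\omega^{-1}}\}$. The reverse implication is immediate, since the Lemma~\ref{lem:poly2} discriminant with $\alpha=0$ is $(\omega-\omega^{-1})^2\tau^2-4$, which depends only on $(\omega-\omega^{-1})^2$; so only the forward direction requires work.

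For the forward direction I would write $D(\sigma,\tau)=(\omega-\omega^{-1})^2\tau^2-4$ for the $\alpha=0$ discriminant and $D'(\sigma,\tau)=A\sigma\tau+B\tau^2+C$ for the $(\beta,\omega')$ discriminant, with $A,B,C$ as in Lemma~\ref{lem:poly2}. By Lemma~\ref{lem:poly2} and Lemma~\ref{lem:eigenvalues}, the equality $E_{0,\omega}=E_{\beta,\omega'}$ says that at each vertex $\begin{bmatrix}\sigma&\tau\\\tau&-\sigma\end{bmatrix}$ of $\Delta_1(t)$ (so $\sigma^2+\tau^2=-1$) the quantity $D$ is a non-zero square exactly when $D'$ is; hence $DD'$ is a square (possibly zero) at every point of the conic $\sigma^2+\tau^2=-1$. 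I would then parametrise this conic rationally from a base point, obtaining $w\mapsto(\sigma(w),\tau(w))$ with common denominator $1+w^2$. The feature special to $q\equiv 3\pmod 4$ is that $1+w^2$ has no root in $GF(q)$ (as $-1$ is a non-square), so $w$ ranges over all of $GF(q)$ and parametrises every point of the conic but one. Clearing denominators produces $F,F'\in GF(q)[w]$ of degree at most $4$ with $FF'$ a square for \emph{every} $w\in GF(q)$ and $\deg(FF')\le 8$.

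The decisive step, and the main obstacle, is to show that $FF'$ fails to be the square of a polynomial in $\overline{GF(q)}[w]$ whenever $s_{\beta,\omega'}\notin\{t_\omega,t_{\omega^{-1}}\}$. Granting this, $y^2-F(w)F'(w)$ is absolutely irreducible, so Theorem~\ref{Solutions} (with $k=\gcd(2,q-1)=2$ and $d\le 8$) gives $|N-q|\le 7\sqrt q$ for the number $N$ of solutions of $y^2=F(w)F'(w)$; but since $FF'$ is a square throughout $GF(q)$ we have $N=2q-\#\{\text{roots of }FF'\}\ge 2q-8$, and the two estimates force $(q-1)(q-64)\le 0$, i.e. $q\le 64$, contradicting $q\ge 67$. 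Thus $FF'$ must be a perfect square; because $D$ and $D'$ are separable along the conic outside a short, explicitly excludable list of degenerate configurations, this happens only if $D$ and $D'$ vanish on the same subset of $\Delta_1(t)$, i.e. $D'=cD$ on the conic for some scalar $c$.

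Finally I would extract the conclusion from the proportionality $D'=cD$. As $D$ carries no $\sigma\tau$-term, this forces the coefficient $A=4\beta(\beta^2-1)(\omega'^2+\omega'^{-2}-2)$ to vanish; with $\omega'\neq\pm1$ this gives $\beta\in\{0,1,-1\}$. The cases $\beta=\pm1$ are precisely where $q\equiv 3\pmod 4$ is used decisively: there one computes $B=-4(\omega'-\omega'^{-1})^2$ and $C=-4(\omega'+\omega'^{-1})^2$, and matching the $\tau^2$-coefficient and constant term of $cD'=D$ would require $(\omega-\omega^{-1})^2$ to equal $-\bigl[2(\omega'-\omega'^{-1})/(\omega'+\omega'^{-1})\bigr]^2$, an impossibility since $-1$ is a non-square. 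This leaves $\beta=0$, and then matching coefficients gives $(\omega'-\omega'^{-1})^2=(\omega-\omega^{-1})^2$, whence $\omega'\in\{\pm\omega,\pm\omega^{-1}\}$, that is $s_{\beta,\omega'}\in\{t_\omega,t_{\omega^{-1}}\}$, as required. I expect the genuinely laborious part to be the separability verification and the cataloguing of the finitely many degenerate configurations (for instance where $B=0$, or where the conic parametrisation ramifies); these are routine but must be handled to license the clean proportionality reduction above.
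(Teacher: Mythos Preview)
Your proposal is correct and follows essentially the same route as the paper's own proof: reduce via $G_t$-conjugacy to comparing $E_{\alpha,\omega}$ with $E_{0,\lambda}$, rationally parametrise the conic $\sigma^2+\tau^2=-1$, show the product $\theta_{\alpha,\omega}\theta_{0,\lambda}$ is not a square in $\overline{GF(q)}[\zeta]$ by forcing proportionality and analysing $\alpha\in\{0,\pm1\}$, and then invoke Theorem~\ref{Solutions} to obtain $(q-1)(q-64)\le0$. Your disposal of the $\beta=\pm1$ case (matching coefficients to exhibit $(\omega-\omega^{-1})^2$ as $-1$ times a nonzero square) is a slightly more direct variant of the paper's argument, which instead derives a quartic in $\omega$ whose quadratic-in-$\omega^2$ discriminant is the non-square $-2^6(\lambda-\lambda^{-1})^2$; both hinge on $-1$ being a non-square when $q\equiv 3\pmod 4$.
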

\begin{proof}
  As we have noted before, as every $s_{\alpha,\omega}$ in the same $G_t$-orbit as $s_{0,\omega}$, it is sufficient to show that
  $E_{\alpha,\omega} = E_{0,\lambda}$ if and only if $\alpha=0$ and $\omega =\pm \lambda, \pm \lambda^{-1}$.

  One direction of the implication is trivial.
  Thus, towards a contradiction,  assume  that $E_{\alpha,\omega} = E_{0,\lambda}$ but $(\alpha,\omega) \neq (0, \pm \lambda^{\pm 1})$.
  Then, by Lemma~\ref{lem:poly2} it holds that for all $(\sigma, \tau)$ with $\sigma^2+\tau^2 =-1$, 
  \begin{equation}\label{eq1}
    \begin{split}
    &4\alpha(\alpha^2 - 1)(\omega^2 + \omega^{-2}-2)\sigma\tau \\
    &+ (\alpha^4-6\alpha^2 +1)(\omega^2+\omega^{-2} -2) \tau^2\\
      &-4(\alpha^4 +\alpha^2\omega^2 + \alpha^2\omega^{-2} +1)
      \end{split}
  \end{equation}
  is a non-zero square in $GF(q)$ if and only if
  \begin{equation}\label{eq2}
    (\lambda^2+\lambda^{-2} -2) \tau^2 -4
  \end{equation}
  is a non-zero square in $GF(q)$.
  
  It is possible to parameterize all but at most one point on the curve $\sigma^2 + \tau^2 = -1$ in terms of a single variable $\zeta$ over $GF(q)$.
  Fix a point $(\sigma_0,\tau_0) \in GF(q)^2$ such that $\sigma_0^2 + \tau_0^2 = -1$.
  Then, given any $\zeta \in GF(q)$, we recover a unique point $(\sigma,\tau)$ on the curve by taking 
  \begin{equation*}\label{eq3}
    \tau  =   \frac{ -\tau_0 - 2\sigma_0\zeta + \tau_0\zeta^2}{\zeta^2+1}
    \text{ and }
    \sigma = \frac{ \sigma_0  - 2\tau_0\zeta  - \sigma_0\zeta^2}{\zeta^2+1}.
  \end{equation*}
  Substituting for $\sigma$ and $\tau$ in (\ref{eq1}) and (\ref{eq2})  and  multiplying by $(\zeta^2 + 1)^2$ we obtain the two polynomials
  \begin{align*}
    \theta_{\alpha,\omega}(\zeta) &=  -4\alpha(\alpha^2 - 1)(\omega^2 + \omega^{-2}-2)\left( \sigma_0  - 2\tau_0\zeta  - \sigma_0\zeta^2\right)\left(\tau_0 + 2\sigma_0\zeta - \tau_0\zeta^2\right) \\
                             &+ (\alpha^4-6\alpha^2 +1)(\omega^2+\omega^{-2} -2) \left(\tau_0 + 2\sigma_0\zeta - \tau_0\zeta^2 \right)^2\\
                             &-4(\alpha^4 +\alpha^2\omega^2 + \alpha^2\omega^{-2} +1)(\zeta^2+1)^2
  \end{align*}
  and
  \[
    \theta_{0,\lambda}(\zeta) = 
    (\lambda^2+\lambda^{-2} -2) \left(\tau_0 + 2\sigma_0\zeta - \tau_0\zeta^2 \right)^2 -4(\zeta^2+1)^2
  \] respectively. 
  Moreover, for all $\zeta \in GF(q)$ it holds that  $\theta_{\alpha,\omega}(\zeta)$ is a non-zero square in $GF(q)$ if and only if $\theta_{0,\lambda}(\zeta)$ is a non-zero square in $GF(q)$.
  Thus $f(\zeta) := \theta_{\alpha,\omega}(\zeta) \theta_{0,\lambda}(\zeta)$ is a square in $GF(q)$ for all $\zeta \in GF(q)$.

  We now show that $f$ is not the square of a  polynomial in $\overline{GF(q)}[\zeta]$ and hence show that $y^2-f(x)\in GF(q)[x,y]$ is absolutely irreducible.
  Observe that, $\theta_{0,\lambda}$ is the difference of two squares and hence  has a factorization
  \begin{align*}
    \theta_{0,\lambda}(\zeta)
    =   &\left( ( (\lambda - \lambda^{-1})\tau_0 -2) + 2(\lambda-\lambda^{-1})\sigma_0\zeta - ( (\lambda-\lambda^{-1})\tau_0 + 2)\zeta^2\right)\\
        &\left( ( (\lambda - \lambda^{-1})\tau_0 + 2) + 2(\lambda-\lambda^{-1})\sigma_0\zeta - ( (\lambda-\lambda^{-1})\tau_0 - 2)\zeta^2\right).
  \end{align*}
  Moreover, observe that if $\theta_{0,\lambda}$ has a repeated root $\zeta_0$ then $\zeta_0^2 = -1$.
  However, one can easily check that no such root can exist, whereby $\theta_{0,\lambda}$ is separable.
  
  If $f$ is the square of some polynomial in $\overline{GF(q)}[\zeta]$, then all of its linear factors must appear with even multiplicity.
  However, $\deg{\theta_{0,\lambda}}=\deg{\theta_{\alpha,\omega}}$ and $\theta_{0,\lambda}$ is separable.
  Hence $f$ is the square of a polynomial if and only if $\theta_{\alpha,\omega} = \kappa \theta_{0,\lambda}$ for some $\kappa \in GF(q)$.
  Notice that $(\tau_0 + 2\sigma_0\zeta - \tau_0\zeta^2)$ and $1+\zeta^2$ are both irreducible in $GF(q)[\zeta]$ and as  such there does not exist $\mu,\eta \in GF(q)$ such that
  \[
    \mu(\tau_0 + 2\sigma_0\zeta - \tau_0\zeta^2)^2 + \eta (1+\zeta^2)^2 = (\tau_0 + 2\sigma_0\zeta - \tau_0\zeta^2)(\tau_0 + 2\sigma_0\zeta - \tau_0\zeta^2).
  \]
  So, as $\theta_{\alpha,\omega} = k\theta_{0,\lambda}$, we must have
    $4\alpha(\alpha^2 - 1)(\omega-\omega^{-1})^2 =0$ and as  $\omega \neq \pm 1$, we must have either $\alpha^2 =1$ or $\alpha=0$.
    We consider each case in turn.
  If $\alpha^2 = 1$, then by examining the $(1+\zeta^2)^2$ coefficients of $\theta_{0,\lambda}$ and $\theta_{\alpha,\omega}$ we see that we must have
  $\kappa = (\omega + \omega^{-1})^2$.
  Then, from the $(\tau_0 + 2\sigma_0\zeta - \tau_0\zeta^2)^2$ coefficients, we obtain
  \[
    -4(\omega^2 +\omega^{-2} -2) = (\omega^2 + \omega^{-2} + 2)(\lambda^2 + \lambda^{-2} - 2).
  \] 
  Now, by re-arranging we see that $\omega$ must be a root of the polynomial
\begin{equation}\label{eq3}
    (\lambda^2 + \lambda^{-2} +2)\omega^4 + 2(\lambda^2+\lambda^{-2} -6)\omega^2  + (\lambda^2+\lambda^{-2}+2) = 0.
\end{equation}
  Clearly, if $\omega_0$ is a root then $\omega_0^2$ is a root of (\ref{eq3}) viewed as a quadratic in $\omega^2$.
  However, the discriminant of (\ref{eq3}) as a quadratic equation in $\omega^2$  is given by  $-2^6(\lambda -\lambda^{-1})^2$ which is a non-zero non-square in $GF(q)$.
  Thus, (\ref{eq3}) has no solutions and we can conclude that $\alpha^2 = 1$ is not a possible case.

  This leaves only the case where $\alpha=0$.
  By examining constant terms of $\theta_{0,\lambda}$ and $\theta_{\alpha,\omega}$ it follows that $\kappa = 1$.
  Hence, $\omega - \omega^{-1} = \lambda - \lambda^{-1}$.
  There are then four possibilities for $\omega$. They are $\pm \lambda$ or $\pm \lambda^{-1}$.
  Thus $(\alpha,\omega) = (0, \pm \lambda)$ or $(\alpha,\omega) = (0,\pm \lambda^{-1})$.
  However, we know this is not the case. 
  We can then conclude that $f$  is not the square of a polynomial $\overline{GF(q)}[\zeta]$.
  Therefore, we have shown that $y^2-f(x) \in GF(q)[x,y]$ is absolutely irreducible. 

  We can now finish the proof.
  Define $N = |\{ (x,y) \in GF(q)^2 \mid y^2 = f(x) \}|$ and observe that as $\deg{f} =8$, we get $N \geq 2q-8$.
    We can then apply  Theorem~\ref{Solutions}  which yields that $|N-q| \leq 7\sqrt{q}$,
    whereby, $(q-1)(q-64) \leq 0$ and hence $q \leq 64$.
    Thus,  as we have assumed that $q > 64$ we get a contradiction.
    This proves the lemma. 
  \end{proof}

  We require one more lemma before we can finish our argument.
  \begin{lemma}\label{lem:faithful2}
    We have that $K$ acts faithfully on $\Delta_3(t)$.
    \end{lemma}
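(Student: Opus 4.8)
The plan is to show that any $g \in K$ which fixes $\Delta_3(t)$ vertex-wise must be the identity. Since $g \in K \normal A_t$, it already fixes $t$ and every vertex of $\Delta_1(t)$; together with the hypothesis that it fixes $\Delta_3(t)$ vertex-wise, it therefore suffices to prove that $g$ also fixes $\Delta_2(t)$ vertex-wise, since then $g$ fixes all of $X$ and so $g = 1$. The engine of the argument is the $4$-cycle freeness of Lemma~\ref{lem:4cycle}: if a vertex $v$ has two \emph{distinct} neighbours both fixed by $g$, then $v$ and $v^g$ are common neighbours of those two fixed vertices, so $v = v^g$.

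So fix $v \in \Delta_2(t)$. Being at distance $2$ from $t$, it has at least one neighbour in $\Delta_1(t)$, and by Lemma~\ref{lem:4cycle} this neighbour $w$ is unique; as $w \in \Delta_1(t)$ it is fixed by $g$. The problem thus reduces to producing a \emph{second} fixed neighbour of $v$. Since $t \notin \Delta_1(v)$ and $w$ is the only neighbour of $v$ lying in $\Delta_1(t)$, this second neighbour must be found in $\Delta_3(t)$. Hence the crux is the combinatorial claim that \emph{every} $v \in \Delta_2(t)$ has a neighbour lying in $\Delta_3(t)$.

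To establish the claim I would work in coordinates. Writing $v = \begin{bmatrix} a & b \\ c & -a\end{bmatrix}$ and a candidate neighbour $u = \begin{bmatrix} x & y \\ z & -x\end{bmatrix}$, the involution condition $u \in X$ reads $x^2 + yz = -1$, while the adjacency relation $uv = -vu$ collapses to the single linear equation $2ax + cy + bz = 0$. As $\com(L,X)$ has diameter $3$ by Lemma~\ref{lem:disks}, Lemma~\ref{lem:eigenvalues} gives that $u \in \Delta_3(t)$ precisely when $tu$ has two distinct eigenvalues; since a direct computation yields $\mathrm{tr}(tu) = z - y$, this is the open condition $(z-y)^2 \neq 4$. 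Thus the neighbours of $v$ correspond (up to sign) to the points of the conic cut on the quadric $x^2 + yz = -1$ by the plane $2ax + cy + bz = 0$, of which there are $|\Delta_1(t)| = (q+1)/2$, whereas the neighbours \emph{not} in $\Delta_3(t)$ satisfy the additional equation $(z-y)^2 = 4$ and so number only a bounded constant. Comparing the two counts produces a neighbour $u \in \Delta_3(t)$ for all but the smallest $q$, which are handled with \textsc{Magma}.

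Granting the claim, $w \in \Delta_1(t)$ and $u \in \Delta_3(t)$ are two distinct neighbours of $v$ fixed by $g$, so Lemma~\ref{lem:4cycle} forces $v = v^g$; as $v$ was an arbitrary element of $\Delta_2(t)$, it follows that $g$ fixes $\Delta_2(t)$, hence all of $X$, and $g = 1$, establishing faithfulness. The main obstacle is the combinatorial claim, and precisely the step of making the neighbour count rigorous: one must verify that the ``bad'' locus $(z-y)^2 = 4$ meets the adjacency plane in only $O(1)$ involutions rather than containing a whole degenerate pencil, and it is here that the explicit eigenvalue and disc bookkeeping of Lemmas~\ref{lem:eigenvalues} and~\ref{lem:disks} carries the argument.
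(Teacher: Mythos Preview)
Your reduction is exactly the paper's: take $g\in K$ fixing $\Delta_3(t)$ pointwise, and show that every $v\in\Delta_2(t)$ has, in addition to its unique neighbour in $\Delta_1(t)$, at least one neighbour in $\Delta_3(t)$; the $4$-cycle freeness of Lemma~\ref{lem:4cycle} then forces $v^g=v$, and hence $g=1$. The paper's proof also reaches the conclusion ``for all $z\in\Delta_2(t)$ we have $\Delta_1(z)\cap\Delta_3(t)\neq\emptyset$'' and finishes the same way.

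The gap is in how you verify that combinatorial claim. You write that, by Lemma~\ref{lem:eigenvalues}, $u\in\Delta_3(t)$ iff $tu$ has two distinct eigenvalues, and then translate this to the ``open condition $(z-y)^2\neq 4$''. But ``two distinct eigenvalues'' in Lemma~\ref{lem:eigenvalues} means two distinct eigenvalues \emph{in $GF(q)$}: with $\det(tu)=1$ the characteristic polynomial is $\lambda^2-(z-y)\lambda+1$, and this splits over $GF(q)$ iff its discriminant $(z-y)^2-4$ is a \emph{non-zero square}, not merely non-zero. Consequently the ``bad'' neighbours of $v$ (those lying in $\{t\}\cup\Delta_1(t)\cup\Delta_2(t)$) are those for which $(z-y)^2-4$ is zero \emph{or a non-square}; on the conic of neighbours of $v$ this is not an $O(1)$ set but, generically, about half of the $(q+1)/2$ points. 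Your naive count therefore does not go through; to push the coordinate approach one would need a Weil-type estimate in the spirit of Lemmas~\ref{lem:bound1} and~\ref{lem:bound2}, after checking that the relevant polynomial is not a non-square constant times a perfect square.

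The paper sidesteps all of this with a purely graph-theoretic argument: assuming some $z\in\Delta_2(t)$ has $\Delta_1(z)\cap\Delta_3(t)=\emptyset$ and $z^g\neq z$, it picks any $v\in\Delta_3(t)$, uses diameter~$3$ to find a path $z,u_1,u_2,v$, shows $u_2$ is fixed (it has neighbours in $\Delta_3(t)$ and in $\Delta_1(t)$), and then derives a $4$-cycle contradiction from the common neighbours of $u_1$ and $u_1^g$. This uses only Lemma~\ref{lem:4cycle} and the diameter bound, requires no field arithmetic, and works for all $q$ with no separate small-case check.
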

    \begin{proof}
      Let  $g \in K$ fix  $\Delta_3(t)$ vertex-wise and suppose that there exists  $z \in \Delta_2(t)$ such that $z^g \neq z$.
      If $\Delta_1(z) \cap \Delta_3(t) \neq \emptyset$ then as $\Delta_1(z)\cap \Delta_1(t) \neq \emptyset$, the graph $\mathcal{C}(L,X)$ contains a 4-cycle.
      This contradicts Lemma~\ref{lem:4cycle} and thus we may suppose that $\Delta_1(z) \cap \Delta_3(t) = \emptyset$.

      Choose any $v \in \Delta_3(t)$ and observe that because $\mathcal{C}(L,X)$ has diameter 3, $2 \leq d(z,v) \leq 3$.
      If $d(z,v) = 2$ then there exists $u \in \Delta_1(z) \cap \Delta_1(v)$ and as $v \in \Delta_1(u) \cap \Delta_3(t)$ we must have that $u^g = u$.
      Hence $\mathcal{C}(L,X)$ contains a 4-cycle on some vertex of $\Delta_1(z)\cap\Delta_1(t)$, $z$, $z^g$ and $u$. This is a contradiction,
      whereby, $d(z,v) = 3$.

      Let $u_1,u_2$ be such that $z,u_1,u_2,v$ is a path of length 3 from $z$ to $v$ in $\mathcal{C}(L,X)$.
      Note that, as $\Delta_1(u_2) \cap \Delta_3(t) \neq \emptyset$,  we have $u_2^g = u_2$. If $u_1 = u_1^g$, then we create a 4-cycle in $\mathcal{C}(L,X)$ and thus we must have $u_1 \neq u_1^g$.
      However, by Lemma~\ref{lem:4cycle}, we have that $\{u_2\} = \Delta_1(u_1) \cap \Delta_1(u_1^g)$.
      But as $u_1,u_1^g \in \Delta_2(t)$ and $g \in K$ we must have $\Delta_1(u_1) \cap \Delta_1(u_1^g) \subseteq \Delta_1(t)$.
      Hence, $u_2 \in \Delta_1(t)$ which contradicts the fact that $v \in \Delta_3(t)$.
      Therefore, for all $z \in \Delta_2(t)$ we have $\Delta_1(z) \cap \Delta_3(t) \neq \emptyset$ and thus $K$ acts faithfully on $\Delta_3(t)$.
    \end{proof}

    We can now prove the $q\equiv3\mod{4}$ part of Theorem~\ref{trm:1} when $q$ is not too small.
    \begin{lemma}\label{lem:2}
      If $q \geq 67$ holds, then $A  = P\Gamma L_2(q)$.
    \end{lemma}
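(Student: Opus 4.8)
The plan is to run exactly the argument of Lemma~\ref{lem:1}, but with $\Delta_3(t)$ playing the role that $\Delta_2(t)$ played in the $q\equiv 1\pmod 4$ setting. As before, let $K\normal A_t$ be the vertex-wise stabilizer of $\Delta_1(t)$ in $A_t$. The target is to prove $t\in\mathrm{Z}(A_t)$, after which pinning down $A$ is immediate.

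First I would show that every $K$-orbit on $\Delta_3(t)$ has the form $\{v,v^t\}$. Since $K$ fixes $\Delta_1(t)$ pointwise and lies in $A_t$, for any $g\in K$ and $v\in\Delta_3(t)$ we have $E_{v^g}=\Delta_2(v^g)\cap\Delta_1(t)=\bigl(\Delta_2(v)\cap\Delta_1(t)\bigr)^g=E_v$, so Lemma~\ref{lem:bound2} forces $v^g\in\{v,v^t\}$. To pass from the $G_t$-orbit representatives $s_{0,\omega}$ to all of $\Delta_3(t)$, I would use that $t\in\mathrm{Z}(G_t)$ together with $K\normal A_t$: writing an arbitrary $v=s_{0,\omega}^{\,h}$ with $h\in G_t$, normality gives $v^K=(s_{0,\omega}^K)^h=\{v,(s_{0,\omega}^t)^h\}$, and since $t$ commutes with $h$ this last vertex equals $v^t$.

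Next I would identify $t$ as the unique element of $K$ having no fixed point on $\Delta_3(t)$. Indeed $t\in K$, and for $v\in\Delta_3(t)$ we have $v^t\neq v$ (otherwise $v$ and $t$ commute, forcing $d(t,v)=1$), so $t$ is fixed-point-free on $\Delta_3(t)$. If $g\in K$ is also fixed-point-free, then by the orbit description $v^g=v^t$ for every $v$, so $gt$ fixes $\Delta_3(t)$ pointwise; Lemma~\ref{lem:faithful2} then yields $gt=1$, i.e.\ $g=t$. As $K\normal A_t$ and the property "having no fixed point on $\Delta_3(t)$" is $A_t$-invariant, this uniqueness gives $\langle t\rangle\normal A_t$, that is $t\in\mathrm{Z}(A_t)$. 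The endgame now copies Lemma~\ref{lem:1}: by Lemma~\ref{lem:factorisation}, $t^A=t^{A_tL}=t^L$, which generates $L$, so $L\normal A$; faithfulness of $A$ on $\com(L,X)$ gives $C_A(L)=1$, and combined with $\auto{L}\cong P\Gamma L_2(q)\le A$ this forces $A=P\Gamma L_2(q)$.

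The genuinely new input for $q\ge 67$ is already packaged in Lemma~\ref{lem:bound2} and Lemma~\ref{lem:faithful2}, so the remaining work is essentially bookkeeping. The step needing the most care is the reduction from the representatives $s_{0,\omega}$ to arbitrary vertices of $\Delta_3(t)$: one must check that the pairing induced by $t$ on representatives transports correctly under $G_t$ — which is exactly where $t\in\mathrm{Z}(G_t)$ is used — and that $t$ is fixed-point-free on all of $\Delta_3(t)$ and not merely on the representatives.
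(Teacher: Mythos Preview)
Your proposal is correct and follows the same route as the paper's proof: use Lemma~\ref{lem:bound2} to see that $K$-orbits on $\Delta_3(t)$ are $\{v,v^t\}$, invoke Lemma~\ref{lem:faithful2} to isolate $t$ as the unique fixed-point-free element of $K$, deduce $t\in\mathrm{Z}(A_t)$, and finish exactly as in Lemma~\ref{lem:1}. The only difference is that you spell out details the paper compresses into ``proceed as in Lemma~\ref{lem:1}''; your paragraph on transporting the pairing via $G_t$ from the representatives $s_{0,\omega}$ is not actually needed, since Lemma~\ref{lem:bound2} is already stated for arbitrary parameters $(\alpha,\omega)$ and the $s_{\alpha,\omega}$ exhaust $\Delta_3(t)$, so your first sentence (that $E_{v^g}=E_v$ forces $v^g\in\{v,v^t\}$ directly) already suffices.
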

    \begin{proof}
      Suppose that $q \geq 67$ and that  $K \normal A_t$  is the vertex-wise stabilizer of $\Delta_1(t)$ in $A_t$.
      By Lemma~\ref{lem:bound2}, we have that the $K$-orbits of $\Delta_3(t)$ all take the form $\{v, v^t\}$.
      Now we may proceed as in Lemma~\ref{lem:1}, using Lemma~\ref{lem:faithful2} in place of Lemma~\ref{lem:faithful1}, to deduce that $t \in \mathrm{Z}(A_t)$.
      Then it follows, as in Lemma~\ref{lem:1}, that $A  = P\Gamma L_2(q)$.
     \end{proof}

  Lemmas~\ref{lem:1} and ~\ref{lem:2} establish  Theorem~\ref{trm:1}(ii) provided $q \geq 73$. The cases when  $5< q < 73$ can be settled using the following \textsc{Magma} code.
  This program takes roughly 30 seconds to run, and so the online version suffices!
\begin{verbatim}
Q := [ k : k in [ 7.. 71] | (k mod 2 eq 1) and #Factorisation(k) eq 1];
for q in Q do
L := PSL(2,q);
X := {@ x : x in L | Order(x) eq 2 @};
Edges := {{i,j} : i,j in [ 1 .. #X] | i lt j and Order(X[i]*X[j]) eq 2};
Gam := Graph<#X | Edges>;
A := AutomorphismGroup(Gam);
print q, 1 eq #A/#PGammaL(2,q);
end for; 
\end{verbatim}
  
  Therefore the proof of Theorem~\ref{trm:1} is complete.

\end{document}